\definecolor {processblue}{cmyk}{0.96,0,0,0}
\tikzset{    vertex/.style={circle,draw,minimum size=1.5em},    edge/.style={->,> = latex'}}
\newtheorem{theorem}{Theorem}[section]
\newtheorem{cor}[theorem]{Corollary}
\newtheorem{lemma}[theorem]{Lemma}
\newtheorem{theo}[theorem]{Theorem}
\newtheorem{lem}[theorem]{Lemma}
\newtheorem{pro}[theorem]{Proposition}
\newtheorem{rem}[theorem]{Remark}
\newtheorem{exa}[theorem]{Example}
\newtheorem{que}[theorem]{Question}
\newtheorem{Definition}[theorem]{Definition}
\newtheorem*{Definition*}{Definition}
\def\qed{\hfill \ifhmode\unskip\nobreak\fi\quad\ifmmode\Box\else$\Box$\fi\\ }
\begin{document}

\title[4-dimensional almost complex $S^1$-manifolds]{Circle actions on four dimensional almost complex manifolds with discrete fixed point sets}
\author{Donghoon Jang}
\address{Department of Mathematics, Pusan National University, Pusan, Korea}
\email{donghoonjang@pusan.ac.kr}

\begin{abstract}
We establish a necessary and sufficient condition for pairs of integers to arise as the weights at the fixed points of an effective circle action on a compact almost complex 4-manifold with a discrete fixed point set. As an application, we provide a necessary and sufficient condition for a pair of integers to arise as the Chern numbers of such an action, answering negatively a question by Sabatini whether $c_1^2[M] \leq 3 c_2[M]$ holds for any such manifold $M$. We achieve this by demonstrating that pairs of integers that arise as weights of a circle action, also arise as weights of a restriction of a $\mathbb{T}^2$-action. Furthermore, we discuss applications to circle actions on complex/symplectic 4-manifolds and semi-free circle actions with discrete fixed point sets.
\end{abstract}

\maketitle

\tableofcontents

\section{Introduction and statements of main results} \label{s1}

The purpose of this paper is to classify the fixed point data of a circle action on a 4-dimensional compact almost complex manifold with a discrete fixed point set and discuss several applications. By the fixed point data, we mean the collection of multisets of weights at the fixed points. The fixed point data encodes information about a manifold; the Chern numbers, the Euler characteristic, the Hirzebruch $\chi_y$-genus, the signature, the Todd genus, etc.

First, we discuss the known classification results of circle actions on 4-dimensional compact manifolds with discrete fixed point sets in different categories. In the complex case, Carrell, Howard, and Kosniowski proved that a holomorphic vector field on a complex surface can be obtained from $\mathbb{CP}^2$ or a Hirzebruch surface by blow-ups \cite{CHK}. In the symplectic case, a Hamiltonian $S^1$-space can also be obtained through blow-ups from $\mathbb{CP}^2$ or a Hirzebruch surface \cite{AhHa}, \cite{Au}, \cite{Ka}. It is worth noting that these papers also address cases where the fixed point set is not discrete, both for complex manifolds and symplectic manifolds.

In the context of oriented manifolds, Fintushel classified circle actions on 4-dimensional compact oriented manifolds in terms of orbit data \cite{F}. The classification of the fixed point data of a circle action on a 4-dimensional compact oriented manifold with a discrete fixed point set is given in \cite{J4}. For an oriented manifold, the sign of each weight at a fixed point is not well-defined, whereas for an almost complex manifold it is well-defined.

Throughout this paper, we assume that any group action on an almost complex manifold preserves the almost complex structure. Let the circle group $S^1$ act on an almost complex manifold $M$. Let $p$ be an isolated fixed point. Then the local action of $S^1$ near $p$ can be described by 
\begin{center}
$g \cdot (z_1,\cdots,z_n)=(g^{w_{p,1}}z_1,\cdots,g^{w_{p,n}}z_n)$, 
\end{center}
for all $g \in S^1 \subset \mathbb{C}$ and $z_1,\cdots,z_n \in \mathbb{C}$, where $\dim M=2n$ and $w_{p,i}$ are non-zero integers, for $1 \leq i \leq n$. The non-zero integers $w_{p,i}$ are called the \textbf{weights} at $p$. The \textbf{index} $n_p$ of $p$ is the number of negative weights at $p$.

In this paper, we establish a necessary and sufficient condition for a set of pairs of integers to arise as the weights at the fixed points of an effective circle action on a 4-dimensional compact almost complex manifold with a discrete fixed point set.

\begin{theorem} \label{t01}
For $1 \leq i \leq k$, let $\{a_i,b_i\}$ be an unordered pair of relatively prime non-zero integers. There exists an effective circle action on a 4-dimensional compact, connected almost complex manifold $M$ with $k$ fixed points with weights $\{a_i,b_i\}_{i=1}^k$ if and only if the following hold.
\begin{enumerate}[(1)]
\item The number of weights that are $+1$ at fixed points of index $j$ is equal to the number of weights that are $-1$ at fixed points of index $j+1$ for $j \in \{0,1\}$.
\item Given an integer $w > 1$ and a relatively prime integer $x$, the following sets have the same cardinality.
\begin{enumerate}[(a)]
\item Fixed points with one weight $+w$ and the other equal to $x$ modulo $w$.
\item Fixed points with one weight $-w$ and the other equal to $x$ modulo $w$.
\end{enumerate}
\end{enumerate}
Moreover, given a pairing between the fixed points in (2a) and (2b) for each $w$ and $x$, we can choose $M$ so that each pair of fixed points is contained in a two-sphere with generic stabilizer $\mathbb{Z}_w$.
\end{theorem}

In fact, we prove more. To discuss, let $M$ be a 4-dimensional compact almost complex manifold equipped with a circle action having a discrete fixed point set. To $M$, we shall associate a certain type of multigraph $\Gamma$ that contains information on the weights at the fixed points and isotropy 2-spheres. Each vertex corresponds to a fixed point, and each edge has a direction and is labeled by a positive integer, so that if an edge $e$ with label $w$ has an initial vertex $p$ and a terminal vertex $q$, the fixed point $p$ has a weight of $+w$ and the fixed point $q$ has a weight of $-w$; see Definitions \ref{d11} and \ref{d12}. Theorem \ref{t11} states that such a multigraph $\Gamma$ can be obtained from a minimal multigraph for a semi-free action followed by a combination of 4 types of operations on it; moreover, the converse holds that any multigraph obtained from a minimal multigraph followed by the 4 types of operations, encodes the fixed point data of some 4-dimensional compact almost complex $S^1$-manifold with a discrete fixed point set.
In other words, we provide minimal data and operations for the weights at the fixed points of such a manifold $M$.

In \cite{S}, Sabatini posed the following ``geography problem'', what possible values are for the Chern numbers of an almost complex $S^1$-manifold with a discrete fixed point set.

\begin{que} \label{q16} \cite{S} Let the circle act on a compact almost complex manifold $M$ with a discrete fixed point set. Then what are the possible values for the Chern numbers of $M$? \end{que}

In particular, Sabatini inquired if the following inequality holds, as it does when $M$ admits a symplectic structure and the circle action is Hamiltonian.

\begin{que} \label{q17} \cite{S} Let the circle act on a 4-dimensional compact almost complex manifold $M$ with a discrete fixed point set. Then does the inequality $c_1^2[M] \leq 3c_2[M]$ hold? \end{que}

Here, $c_1$ and $c_2$ are the first and second equivariant Chern classes of $M$, respectively, and $[M]$ is the fundamental class of $M$. In \cite[$\S 6.1$]{S}, Sabatini proved that for a 4-dimensional compact almost complex $S^1$-manifold $M$ with a discrete fixed point set, $c_1^2[M]=10 N_0-N_1$ and $c_2[M]=2N_0+N_1$, where $N_i$ denotes the number of fixed points of index $i$. Therefore, proving $c_1^2[M] \leq 3c_2[M]$ is equivalent to proving $N_0 \leq N_1$. We provide a complete answer to Question \ref{q16} in dimension 4, answering Question \ref{q17} negatively by constructing a manifold $M$ with $N_1 < N_0$.

\begin{theo} \label{c115}
Fix $n_1,n_2 \in \mathbb{R}$. There exists a circle action on a 4-dimensional compact connected almost complex manifold $M$ with a non-empty discrete fixed point set such that $c_1^2[M]=10n_1-n_2$ and $c_2[M]=2n_1+n_2$ if and only if $n_1$ and $n_2$ are positive integers.
\end{theo}

Theorem \ref{c115} is obtained as a consequence of the following theorem.

\begin{theo} \label{c19}
Fix $N_0,N_1,N_2 \in \mathbb{Z}$, not all zero. There exists a circle action on a 4-dimensional compact connected almost complex manifold $M$ with exactly $N_0$ fixed points of index $0$, $N_1$ fixed points of index $1$, and $N_2$ fixed points of index $2$ if and only if $N_0=N_2$ and $N_0,N_1,N_2$ are all positive. In this case, the Todd genus of $M$ is equal to $N_0$.
\end{theo}

In particular, Theorem \ref{c19} implies that any $S^1$-action on a 4-dimensional compact almost complex manifold with fixed points has at least $2 \cdot \mathrm{Todd}(M)+1$ fixed points, and has at least three fixed points. Here, $\mathrm{Todd}(M)$ is the Todd genus of $M$.
In fact, three fixed points can only occur in dimension 4; see \cite{J1, J2}.

Moreover, for a set of pairs of integers that arises as the weights at the fixed points of an effective $S^1$-action on a compact almost complex 4-manifold $M$, we construct another compact almost complex 4-manifold $M'$ equipped with a $\mathbb{T}^2$-action, in which the set arises as the weights at the fixed points of the action of the first $S^1$-factor of $\mathbb{T}^2$ on $M'$.

\begin{theo} \label{t314} Let the circle act effectively on a 4-dimensional compact almost complex manifold $M$ with a discrete fixed point set. Then there exists another 4-dimensional compact, connected almost complex manifold $M'$ equipped with a $\mathbb{T}^2$-action, such that the action of the first $S^1$-factor of $\mathbb{T}^2$ on $M'$ has the same weights at the fixed points as $M$. \end{theo}

Let $M$ be a 4-dimensional compact connected manifold, complex or symplectic. Let the circle act on $M$, preserving the given structure. Suppose that the fixed point set is non-empty and discrete. If $M$ is complex, in \cite{CHK}, Carrell, Howard, and Kosniowski classified the fixed point data of $M$. If $M$ is symplectic, the fixed point data is classified by Ahara-Hattori \cite{AhHa}, Audin \cite{Au}, and Karshon \cite{Ka}. When $M$ is complex or symplectic, the Todd genus of $M$ is equal to 1. The Todd genus $\textrm{Todd}(M)$ of $M$ is equal to the number of fixed points of index 0; see Theorem \ref{t21}. Therefore, as an application of Theorem \ref{t01}, we recover the classifications of the fixed point data, of a circle action on a compact complex (or symplectic) 4-manifold with a discrete fixed point set.

\begin{theo} \label{t02}
For $1 \leq i \leq k$, let $\{a_i,b_i\}$ be an unordered pair of relatively prime non-zero integers. There exists an effective circle action on a 4-dimensional compact, connected manifold $M$, complex or symplectic, with $k$ fixed points with weights $\{a_i,b_i\}_{i=1}^k$ if and only if the following hold.

\begin{enumerate}[(1)]
\item The number of weights that are $+1$ at fixed points of index $j$ is equal to the number of weights that are $-1$ at fixed points of index $j+1$ for $j \in \{0,1\}$.
\item Given an integer $w > 1$ and a relatively prime integer $x$, the following sets have the same cardinality.
\begin{enumerate}[(a)]
\item Fixed points with one weight $+w$ and the other equal to $x$ modulo $w$.
\item Fixed points with one weight $-w$ and the other equal to $x$ modulo $w$.
\end{enumerate}
\item There is exactly one fixed point whose weights are all positive.
\end{enumerate}
\end{theo}

We discuss semi-free circle actions on compact almost complex manifolds with discrete fixed point sets. 
In \cite{TW}, Tolman and Weitsman proved that if the circle acts semi-freely on a $2n$-dimensional compact almost complex manifold with a discrete fixed point set, the number of fixed points of index $i$, denoted as $N_i$, satisfies $N_i=N_0 \cdot {n \choose i}$ for $0 \leq i \leq n$. As a consequence, they proved that if the circle acts symplectically and semi-freely on a compact symplectic manifold $M$ with a non-empty discrete fixed point set, the action must be Hamiltonian (and hence the Todd genus of $M$ is equal to 1), and the fixed point data matches that of a diagonal action on the product of $S^2$'s, on each of which the circle acts by rotation. In particular, there are exactly $2^n$ fixed points. Feldman \cite{Fe} and Li \cite{L} reproved this. In this paper, we prove the converse of the above statement; given any positive integers $k$ and $n>1$, we construct a semi-free circle action on a $2n$-dimensional compact connected almost complex manifold $M$ with a discrete fixed point set such that $\textrm{Todd}(M)=k$ and $M$ has precisely $k \cdot 2^n$ fixed points. Therefore, our result can be viewed as a generalization of the result for semi-free symplectic circle actions by \cite{TW} to almost complex manifolds.

\begin{theo} \label{t13}
Fix non-negative integers $N_0,N_1,\cdots,N_n$, where $n \geq 2$. There exists a semi-free circle action on a $2n$-dimensional compact, connected almost complex manifold $M$ with exactly $N_i$ fixed points of index $i$ for each $i$ if and only if $N_i=N_0 {n \choose i}$ for all $i$; moreover, in this case, the Todd genus of $M$ is $N_0$.
\end{theo}

In Theorem \ref{t13}, we exclude dimension 2 simply because, in dimension 2, if there is a fixed point, the manifold is the 2-sphere $S^2$ and it has 2 fixed points, that is, a rotation of the 2-sphere.

The paper is organized as follows. 
\begin{itemize}
\item In Section \ref{s2}, we review background and provide preliminary results on almost complex $S^1$-manifolds with discrete fixed point sets. 
\item Section \ref{s3} presents a minimal multigraph and operations for a labeled directed multigraph that contains the fixed point data of a circle action on a 4-dimensional compact almost complex manifold with a discrete fixed point set (Theorem \ref{t11}).
\item Section \ref{plumbing} discusses equivariant plumbing techniques used to construct a $\mathbb{T}^2$-action on a 4-dimensional compact connected almost complex manifold with a discrete fixed point set.
\item Section \ref{extends} introduces the concept of extendability of a 2-regular labeled directed multigraph and explores related properties.
\item In Section \ref{s7}, using the preliminaries from the previous sections, we prove Theorems \ref{t01}, \ref{t314}, and \ref{t11}.
\item Section \ref{s5} is dedicated to proving Theorem \ref{c115} and Theorem \ref{c19}.
\item Section \ref{s4} involves a comparison of circle actions with discrete fixed point sets on 4-dimensional almost complex, complex, and symplectic manifolds, and a proof of Theorem \ref{t02}.
\item In Section \ref{s6}, we discuss semi-free actions with discrete fixed point sets on symplectic and almost complex manifolds, and prove Theorem \ref{t13}
\end{itemize}

\section{Background and preliminaries} \label{s2}

In this section, we review background and present preliminary results.

For an action of a group $G$ on a manifold $M$, we denote by $M^G$ the set of points in $M$ that are fixed by the $G$-action. That is, $M^G=\{m \in M \, | \, g \cdot m = m \textrm{ for all } g \in G\}$.

An \textbf{almost complex manifold} $(M,J)$ is a manifold $M$ equipped with a fiberwise linear complex structure $J_m$ on each tangent space $T_mM$. An action of a group $G$ on an almost complex manifold $(M,J)$ is said to \textbf{preserve the almost complex structure} if $dg \circ J=J \circ dg$ for all $g \in G$.

Let $M$ be a compact almost complex manifold. The Hirzebruch $\chi_y$-genus is the genus belonging to the power series $\frac{x(1+ye^{-x(1+y)})}{1-e^{-x(1+y)}}$. The Hirzebruch $\chi_y$-genus of $M$ contains three pieces of information; $\chi_{-1}(M)=\chi(M)$ is the Euler characteristic of $M$, $\chi_0(M)=\textrm{Todd}(M)$ is the Todd genus of $M$, and $\chi_1(M)=\textrm{sign}(M)$ is the signature of $M$. In \cite{HBJ}, Hirzebruch, Berger, and Jung proved that for a circle action on a compact complex manifold with a discrete fixed point set, the Hirzebruch $\chi_y$-genus is rigid under the circle action. Following the idea, Li extended the result to almost complex manifolds.

\begin{theorem} \label{t21} \cite{HBJ}, \cite{L} Let the circle act on a $2n$-dimensional compact almost complex manifold $M$ with a discrete fixed point set. For each integer $i$ such that $0 \leq i \leq n$,
\begin{center}
$\displaystyle \chi^i(M)=\sum_{p \in M^{S^1}} \frac{\sigma_i (t^{w_{p,1}}, \cdots, t^{w_{p,n}})}{\prod_{j=1}^n (1-t^{w_{p,j}})} = (-1)^i N_i = (-1)^i N_{n-i}$,
\end{center}
where $\chi_y(M)=\sum_{i=0}^n \chi^i(M) \cdot y^i$ is the Hirzebruch $\chi_y$-genus of $M$, $t$ is an indeterminate, $\sigma_i$ is the $i$-th elementary symmetric polynomial in $n$ variables, and $N_i$ is the number of fixed points of index $i$. Moreover, $\chi^0(M)=\chi_0(M)$ is equal to the Todd genus of $M$. \end{theorem}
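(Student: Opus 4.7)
The plan is to apply the Atiyah--Bott--Segal equivariant Lefschetz fixed point formula to the Dolbeault-type elliptic operators whose indices assemble into the Hirzebruch $\chi_y$-genus, and then invoke the rigidity theorem. Writing
$$\chi_y(M)(t) \;=\; \sum_{q=0}^n y^q\cdot\mathrm{ind}\bigl(\bar\partial\otimes\Lambda^q T^{*(0,1)}M\bigr)(t),$$
the equivariant localization formula expresses each index as a sum over the discrete fixed point set; at a fixed point $p$ with weights $w_{p,1},\dots,w_{p,n}$ the local contribution is $\prod_{j=1}^n (1+y\,t^{w_{p,j}})/(1-t^{w_{p,j}})$. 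Expanding via the identity $\prod_j(1+y z_j)=\sum_i y^i\sigma_i(z_1,\dots,z_n)$ and extracting the coefficient of $y^i$ produces precisely the displayed formula for $\chi^i(M)(t)$.

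Next I would invoke rigidity: by Hirzebruch--Berger--Jung \cite{HBJ} in the holomorphic case and by Li \cite{L} in the almost complex case, each coefficient $\chi^i(M)(t)$ is constant in $t$. To identify the constant I take the limit $t\to 0^+$ factor by factor in each local contribution. For a positive weight $w_{p,j}>0$ the factor $(1+y t^{w_{p,j}})/(1-t^{w_{p,j}})$ tends to $1$; for a negative weight $w_{p,j}<0$, multiplying numerator and denominator by $t^{-w_{p,j}}$ rewrites the factor as $(t^{-w_{p,j}}+y)/(t^{-w_{p,j}}-1)$, which tends to $-y$. Hence the local contribution at $p$ tends to $(-y)^{n_p}$, where $n_p$ is the index, and summing,
$$\chi_y(M) \;=\; \sum_{p\in M^{S^1}} (-y)^{n_p} \;=\; \sum_{i=0}^n (-1)^i N_i\, y^i.$$
Matching coefficients of $y^i$ yields $\chi^i(M) = (-1)^i N_i$, and $\chi^0(M)=\textrm{Todd}(M)$ is the classical identification $\chi_0=\textrm{Todd}$.

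For the symmetry $N_i=N_{n-i}$, I would replace the given circle action by its inverse, so that $g\in S^1$ acts as $g^{-1}$ did originally. The fixed set is unchanged, but every weight at every fixed point flips sign, so the index at $p$ becomes $n-n_p$; consequently the inverted action has $N_i^{\mathrm{new}}=N_{n-i}$. Since $\chi_y(M)$ depends only on the underlying almost complex manifold and not on the choice of $S^1$-action, applying the identity $\chi^i(M)=(-1)^iN_i$ to both the original and the inverted action gives $(-1)^iN_i=(-1)^iN_{n-i}$, hence $N_i=N_{n-i}$.

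The principal obstacle is the rigidity step in the almost complex setting: the localization formula and the limit computation above are formal once the operators are in place, but the fact that the resulting rational function of $t$ is actually independent of $t$ outside the holomorphic category is precisely the content of Li's nontrivial extension \cite{L} of \cite{HBJ}, which I would cite as input rather than reprove.
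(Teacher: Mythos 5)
Your proposal is correct. The paper does not prove this statement at all --- it is quoted as a background result with citations to \cite{HBJ} and \cite{L} --- and your argument is precisely the standard proof from those references: equivariant localization for the Dolbeault-type operators gives the fixed-point sum, rigidity (correctly identified and cited as the one genuinely nontrivial input in the non-integrable case) makes each $\chi^i(M)(t)$ constant, the $t\to 0^+$ limit evaluates that constant as $(-1)^i N_i$, and reversing the circle action (which preserves $J$ and flips every weight, hence sends $n_p$ to $n-n_p$) yields $N_i=N_{n-i}$. All steps check out, including the local limit computation $\bigl(t^{-w}+y\bigr)/\bigl(t^{-w}-1\bigr)\to -y$ for $w<0$.
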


For a circle action on a compact almost complex manifold with a discrete fixed point set, each time the weight $w$ occurs, the weight $-w$ also occurs.

\begin{lem} \cite{H}, \cite{L} Let the circle act on a $2n$-dimensional compact almost complex manifold $M$ with a discrete fixed point set. For each integer $w$,
\begin{center}
$\displaystyle \sum_{p \in M^{S^1}} N_p(w)=\sum_{p \in M^{S^1}} N_p(-w)$.
\end{center}
Here, for an integer $w$, $N_p(w)=|\{i : w_{p,i}=w, 1 \leq i \leq n\}|$ denotes the number of times the weight $w$ occurs at $p$.
\end{lem}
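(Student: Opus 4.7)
The strategy is to deduce the lemma from the rigidity of the equivariant Hirzebruch $\chi_y$-genus (Theorem~\ref{t21}). Specializing to $i=0$, the rational function
\[
T(t) := \sum_{p \in M^{S^1}} \prod_{j=1}^{n} \frac{1}{1 - t^{w_{p,j}}}
\]
equals the Todd genus of $M$ and hence is a constant independent of $t$. In particular, $T(t)$ has no poles on $\mathbb{C}^\times$, and every non-constant Laurent coefficient of $T(t)$ in each of the two annular regimes $0 < |t| < 1$ and $|t| > 1$ must vanish.

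My plan is to argue by induction on $|w|$. For $|w| \ge 2$, the fixed submanifold $M^{\mathbb{Z}_{|w|}}$ is a disjoint union of compact almost complex $S^1$-manifolds $F$ of strictly smaller dimension (after passing, if necessary, to an effective action by dividing out the kernel of the $S^1$-action). At an $S^1$-fixed point $p \in F^{S^1}$, the weights of the induced $S^1$-action on $T_pF$ are exactly those $w_{p,j}$ in $T_pM$ divisible by $|w|$. Applying the lemma to each $F$ (the inductive hypothesis, valid in lower dimension) and summing over components yields $\sum_p N_p(\alpha) = \sum_p N_p(-\alpha)$ for every multiple $\alpha$ of $|w|$; specializing to $\alpha = w$ delivers the desired balance.

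The remaining cases $w = \pm 1$ do not reduce via $M^{\mathbb{Z}_1} = M$ and form the main obstacle. To handle these, I would expand each factor $\frac{1}{1 - t^{w_{p,j}}}$ as a formal geometric series (with opposite signs according to $\mathrm{sgn}(w_{p,j})$) in each of the two convergence regimes, and extract the coefficients of $t^1$ and $t^{-1}$ from the $|t|<1$ and $|t|>1$ expansions of $T(t)$, respectively. Each vanishing coefficient translates into a linear identity among the multiplicities $N_p(\pm 1)$: observing further that $N_p(-1) = 0$ whenever $p$ has index $0$ and that $N_p(1) = 0$ whenever $p$ has maximal index, the two identities combine to give $\sum_p N_p(1) = \sum_p N_p(-1)$ directly. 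If this last combinatorial step is not transparent enough, I would instead invoke the richer rigidity of the two-variable equivariant $\chi_y$-genus $\sum_p \prod_j \frac{1 + y\, t^{w_{p,j}}}{1 - t^{w_{p,j}}}$ (polynomial in $y$, constant in $t$) and compare the coefficients of various monomials $y^k t^m$ to isolate the $\pm 1$-balance.
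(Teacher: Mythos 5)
The paper does not actually prove this lemma; it is quoted from Hattori \cite{H} and Li \cite{L}, so there is no internal proof to compare against. Your overall architecture — rigidity of the equivariant $\chi_y$-genus plus induction on $|w|$ through the isotropy submanifolds $M^{\mathbb{Z}_{|w|}}$ — is the standard route and is sound. In particular the reduction for $|w|\ge 2$ is correct: after passing to an effective action, each component $F$ of $M^{\mathbb{Z}_{|w|}}$ is a lower-dimensional compact almost complex $S^1$-manifold containing all of $M^{S^1}\cap F$, whose weights at a fixed point are exactly the $M$-weights divisible by $|w|$, so the inductive hypothesis applied to each $F$ and summed over components gives the balance for every multiple of $|w|$. (Two small points you should make explicit: quotienting by the kernel rescales $w$ to $w/k$, which may land you back in the $\pm1$ case; and the base case of the dimension induction.)

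The genuine gap is in your primary argument for $w=\pm 1$. The coefficient of $t^{1}$ in the $|t|<1$ expansion of $T(t)=\sum_p\prod_j(1-t^{w_{p,j}})^{-1}$ receives a contribution $N_p(1)$ only from points of index $0$ and a contribution $-N_p(-1)$ only from points of index $1$ (any point with $n_p\ge 2$, or with $n_p=1$ and negative weight of absolute value $\ge 2$, contributes nothing to $t^1$). So this regime yields only $\sum_{n_p=0}N_p(1)=\sum_{n_p=1}N_p(-1)$, and the $|t|>1$ regime yields only $\sum_{n_p=n}N_p(-1)=\sum_{n_p=n-1}N_p(1)$. These two identities "combine to give $\sum_p N_p(1)=\sum_p N_p(-1)$ directly" only when $n\le 2$, i.e.\ in dimension at most $4$; for $n\ge 3$ the occurrences of $\pm1$ at fixed points of intermediate index $1\le n_p\le n-1$ are invisible to both identities, and the lemma is stated in arbitrary dimension. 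Your fallback does repair this: expanding $\sum_p\prod_j\frac{1+yt^{w_{p,j}}}{1-t^{w_{p,j}}}$ for $|t|<1$, the factor for a positive weight $a$ is $1+(1+y)(t^a+t^{2a}+\cdots)$ and for a negative weight $-a$ is $-y-(1+y)(t^{a}+t^{2a}+\cdots)$, so the coefficient of $t^1$ equals $(1+y)\sum_p\left[N_p(1)(-y)^{n_p}-N_p(-1)(-y)^{n_p-1}\right]$; setting this to zero and comparing powers of $y$ gives $\sum_{n_p=j}N_p(1)=\sum_{n_p=j+1}N_p(-1)$ for every $j$ (this is Proposition \ref{p22} for $a=1$, valid without any minimality hypothesis since only weights $\pm1$ can produce a $t^1$ term), and summing over $j$ finishes the case $w=\pm1$. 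You need to actually carry out this computation rather than leave it as a contingency; alternatively, note that Proposition \ref{p22} applied to the action and to the reversed action (which negates all weights) covers $w=\pm1$ in two lines, since if $+1$ occurs it is automatically the smallest positive weight.
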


In addition, the smallest positive weight satisfies a stronger property.

\begin{pro} \label{p22} \cite{JT} Let the circle act  on a $2n$-dimensional compact almost complex manifold $M$ with a discrete fixed point set. Let $a$ be the smallest positive weight that occurs at any fixed point. Given any $j \in \{0,1,\dots,n-1\}$, the number of times the weight $+a$ occurs at fixed points of index $j$ is equal to the number of times the weight $-a$ occurs at fixed points of index $j+1$. That is, for any $j \in \{0,1,\dots,n-1\}$,
\begin{center}
$\displaystyle \sum_{p \in M^{S^1}, \, n_p=j} N_p(a)=\sum_{p \in M^{S^1}, \, n_p=j+1} N_p(-a)$,
\end{center}
where $n_p$ is the index of $p$, and for an integer $w$, $N_p(w)=|\{i : w_{p,i}=w, 1 \leq i \leq n\}|$ is the number of times the weight $w$ occurs at $p$. \end{pro}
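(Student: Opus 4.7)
The plan is to derive the identity by extracting the coefficient of $t^a$ from the $\chi_y$-genus formula of Theorem \ref{t21}, viewed as a formal power series in $t$. By Theorem \ref{t21}, the left-hand side of
\[
\sum_{p\in M^{S^1}}\prod_{i=1}^n\frac{1+yt^{w_{p,i}}}{1-t^{w_{p,i}}}=\sum_{i=0}^n(-y)^iN_i
\]
is a polynomial in $y$ that does not depend on $t$, so every coefficient of $t^k$ for $k\neq 0$ on the left must vanish. I will show that the vanishing of the $k=a$ coefficient is equivalent to the proposition.

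First I would expand each local factor as a power series in $t$. For a positive weight $w>0$,
\[
\frac{1+yt^w}{1-t^w}=1+(1+y)t^w+(1+y)t^{2w}+\cdots,
\]
while for a negative weight $w=-u$ with $u>0$, rewriting $\frac{1+yt^{-u}}{1-t^{-u}}=\frac{t^u+y}{t^u-1}$ gives
\[
\frac{1+yt^{-u}}{1-t^{-u}}=-y-(1+y)t^u-(1+y)t^{2u}-\cdots.
\]
In each case the constant term equals $1$ if $w>0$ and $-y$ if $w<0$, and the next nonzero term sits at $t^{|w|}$ with coefficient $\pm(1+y)$. Because $a$ is the smallest positive weight, every weight satisfies $|w|\geq a$, with equality precisely when $w=\pm a$; in particular no factor contributes at powers $t^1,\dots,t^{a-1}$, and only factors with $|w_{p,i}|=a$ contribute a $t^a$ term.

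Next I would compute the $t^a$ coefficient at each fixed point $p$. Since no factor contributes a $t^k$-term for $0<k<a$, this coefficient comes only from selecting one factor with $|w_{p,i}|=a$ to supply its $t^a$ term while every other factor supplies its constant term. Using that the product of all $n$ constants at $p$ equals $(-y)^{n_p}$, the $t^a$ coefficient at $p$ works out to
\[
(1+y)\Bigl[N_p(a)(-y)^{n_p}-N_p(-a)(-y)^{n_p-1}\Bigr].
\]
Summing over $p$ and using the vanishing of the total $t^a$ coefficient, one obtains, after cancelling the factor $1+y$,
\[
\sum_p N_p(a)(-y)^{n_p}=\sum_p N_p(-a)(-y)^{n_p-1}.
\]
Reindexing the right-hand side via $j=n_p-1$ and comparing coefficients of $(-y)^j$ on both sides yields $\sum_{n_p=j}N_p(a)=\sum_{n_p=j+1}N_p(-a)$ for every $j\in\{0,1,\dots,n-1\}$, as required.

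The main point to handle carefully, rather than a genuine obstacle, is the formal-power-series setup: one must fix a consistent expansion convention for negative-weight factors so that the equality of Theorem \ref{t21} lifts to an honest identity in $\mathbb{Z}[y][[t]]$. The convention above (expanding $\tfrac{t^u+y}{t^u-1}$ at $t=0$) is standard and produces matching constant terms $(-y)^{n_p}$, which is precisely what makes the final coefficient comparison work. The crucial input is the smallest-weight hypothesis on $a$, which guarantees that the $t^a$ coefficient isolates the $\pm a$-weight occurrences with no interference from weights of larger magnitude.
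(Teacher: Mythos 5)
Your argument is correct, and it is essentially the standard proof: the paper itself gives no proof of Proposition \ref{p22} (it cites \cite{JT}), and the cited source establishes it by exactly this device of expanding the localized $\chi_y$-genus in powers of $t$ and extracting the coefficient of $t^a$. The one step you should make explicit is why \emph{negative} weights also satisfy $|w|\geq a$: minimality of $a$ only constrains positive weights directly, and you need the pairing lemma stated just before the proposition ($\sum_p N_p(w)=\sum_p N_p(-w)$, from \cite{H}, \cite{L}) to rule out a negative weight $-u$ with $0<u<a$. With that one-line addition, the computation of the $t^a$-coefficient, the cancellation of the factor $1+y$, and the comparison of coefficients of $(-y)^j$ all go through as you wrote them.
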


Applying Proposition \ref{p22} to the smallest positive weight, it follows that there must exist two fixed points whose indices differ by 1.

\begin{cor} \cite{J5} \label{c23} Let the circle act on a $2n$-dimensional compact almost complex manifold $M$ with a non-empty discrete fixed point set. Then there exists $0 \leq i \leq n-1$ such that both of $N_i$ and $N_{i+1}$ are not zero, where $N_i$ is the number of fixed points of index $i$. \end{cor}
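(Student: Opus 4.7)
The plan is to apply Proposition \ref{p22} to the smallest positive weight, which is exactly the setup the proposition was designed for. Since the fixed point set is non-empty, each of the (finitely many) fixed points contributes $n$ nonzero integer weights, and by the preceding lemma the multiset of all weights is invariant under $w \mapsto -w$; in particular some positive weight actually appears, so it makes sense to let $a$ be the smallest positive integer that occurs as a weight at some fixed point.

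Next I would locate a fixed point witnessing the weight $+a$. By the choice of $a$, there exists a fixed point $p$ with $N_p(a) \geq 1$. Let $j = n_p$ be its index. Since $p$ has at least one positive weight (namely $+a$), not all $n$ weights at $p$ are negative, so $j \leq n-1$. In particular $N_j \geq 1$.

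Now apply Proposition \ref{p22} to the weight $a$ and the index $j$:
\[
\sum_{q \in M^{S^1},\, n_q = j} N_q(a) \;=\; \sum_{q \in M^{S^1},\, n_q = j+1} N_q(-a).
\]
The left-hand side is at least $N_p(a) \geq 1$, so the right-hand side is positive, which means there exists a fixed point $q$ of index $j+1$ with $N_q(-a) \geq 1$. Hence $N_{j+1} \geq 1$ as well, and $i = j$ is the required index with $0 \leq i \leq n-1$ and $N_i, N_{i+1} > 0$.

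There is no real obstacle here; the whole content is packaged inside Proposition \ref{p22}. The only subtlety is noting the two easy preliminary facts, namely that some positive weight must occur (so that $a$ is well-defined) and that any fixed point carrying a positive weight has index at most $n-1$ (so that $j+1$ is still a legal index). Both are immediate, so the corollary follows in one line from Proposition \ref{p22}.
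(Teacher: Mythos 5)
Your proposal is correct and matches the paper's intended argument exactly: the paper derives Corollary \ref{c23} by "applying Proposition \ref{p22} to the smallest positive weight," which is precisely what you do, and your two preliminary observations (that a positive weight exists, and that a fixed point carrying a positive weight has index at most $n-1$) are the right ones to make the one-line deduction rigorous.
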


Alternatively, since $\chi^i(M)=(-1)^iN_i$ for $0 \leq i \leq n$, Corollary \ref{c23} implies that there exists $0 \leq i \leq n-1$ such that both of $\chi^i(M)$ and $\chi^{i+1}(M)$ are not zero, where $\chi_y(M)=\sum_{i=0}^n \chi^i(M) \cdot y^i$ is the Hirzebruch $\chi_y$-genus of $M$.

The weights at fixed points lying in the same connected component of $M^{\mathbb{Z}_w}$ are equal modulo $w$;

\begin{lem} \label{l21} \cite{T}, \cite{GS} Let the circle act on a compact almost complex manifold $M$. Let $p$ and $p'$ be fixed points that lie in the same connected component of $M^{\mathbb{Z}_{w}}$, for some positive integer $w$. Then the $S^{1}$-weights at $p$ and at $p'$ are equal modulo $w$. \end{lem}

Here, the group $\mathbb{Z}_w$ acts on $M$ as a cyclic subgroup of $S^1$. Lemma \ref{l21} states that if $p$ and $p'$ lie in the same connected component of $M^{\mathbb{Z}_w}$, there exists a bijection $\pi:\{1,2,\cdots,n\} \to \{1,2,\cdots,n\}$ such that $w_{p,i} \equiv w_{p',{\pi(i)}} \mod w$ for all $i$, where $\dim M=2n$.

For a circle action on a compact almost complex manifold with a discrete fixed point set, there is a certain type of multigraph that contains information of the data on the fixed point set.

\begin{Definition} \label{d11}
\begin{enumerate}[(1)]
\item A \textbf{multigraph} is a pair $(V,E)$, where $V$ is a set of vertices and $E$ is a set of (unordered) edges.
\item A multigraph $(V,E)$ is called \textbf{directed} if there are maps $i:E \to V$ and $t:E \to V$ giving the initial and terminal vertices of each edge.
\item A multigraph $(V,E)$ is called \textbf{labeled} if there is a map $w$ from $E$ to the set $\mathbb{N}^+$ of positive integers. For any edge $e$, $w(e)$ is called the \textbf{label} of $e$.
\item Let $\Gamma=(V,E)$ be a labeled directed multigraph. 
\begin{enumerate}[(a)]
\item Let $e$ be an edge of $\Gamma$. We say that $(i(e),t(e))$ is \textbf{$w(e)$-edge}. Alternatively, we say that $(t(e),i(e))$ is $(-w(e))$-edge.
\item For a vertex $v$ of $\Gamma$, we call $\{w(e) \ | \ i(e)=v\} \cup \{-w(e) \ | \ t(e)=v\}$ the \textbf{weights} at $v$.
\end{enumerate}
\end{enumerate}
\end{Definition}

\begin{Definition} \label{d12} \cite{JT}
Let the circle act on a compact almost complex manifold $M$ with a discrete fixed point set. We say that a labeled directed multigraph $\Gamma$ \textbf{describes} $M$ if the following hold:
\begin{enumerate}[(1)]
\item The vertex set of $\Gamma$ is the fixed point set $M^{S^1}$ of $M$.
\item The multiset of the weights at $p$ is $\{w(e)\,|\,i(e)=p\} \cup \{-w(e)\,|\,t(e)=p\}$ for all $p \in M^{S^1}$.
\item For each edge $e$, the two endpoints $i(e)$ and $t(e)$ lie in the same connected component of the isotropy submanifold $M^{\mathbb{Z}/(w(e))}$. Here, $\mathbb{Z}/(w(e))$ acts on $M$ as a subgroup of $S^1$.
\end{enumerate}
\end{Definition}

Suppose that a labeled directed multigraph $\Gamma$ describes a compact almost complex $S^1$-manifold $M$ with a discrete fixed point set.
If an edge $e$ with label $w$ has initial vertex $p$ and terminal vertex $q$, Definition \ref{d12} means that $p$ has the weight $+w$ and $q$ has the weight $-w$; moreover, $p$ and $q$ are in the same component of $M^{\mathbb{Z}_w}$. Thus, the multigraph $\Gamma$ contains information on the weights at the fixed points. In addition, if $\dim M = 2n$, $\Gamma$ is $n$-regular; every vertex has $n$-edges.

For a directed multigraph, by a self-loop we mean an edge $e$ whose initial vertex and terminal vertex coincide, that is, $i(e)=t(e)$. 
In \cite{JT}, using Proposition \ref{p22}, Tolman and the author proved that for a compact almost complex $S^1$-manifold $M$ with a discrete fixed point set, there exists a labeled directed multigraph describing $M$.

\begin{lem} \label{l26} \cite{JT}
Let the circle act on a compact almost complex manifold $M$ with a discrete fixed point set. Then there exists a labeled, directed multigraph describing $M$ such that, for each edge $e$, the index of $i(e)$ in the isotropy submanifold $M^{\mathbb{Z}/(w(e))}$ is one less than the index of $t(e)$ in $M^{\mathbb{Z}/(w(e))}$. In particular, the multigraph has no self-loops. \end{lem}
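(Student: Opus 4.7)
The plan is to build the multigraph one label at a time. For each positive integer $w$, consider any connected component $X$ of the isotropy submanifold $M^{\mathbb{Z}/(w)}$. As a component of the fixed set of a compact subgroup acting on $M$ by almost complex automorphisms, $X$ is itself a compact almost complex manifold, and the quotient circle $S^1/\mathbb{Z}/(w) \cong S^1$ acts on $X$ preserving its almost complex structure. The fixed point set of this induced action is $X \cap M^{S^1}$, hence discrete, and at any fixed point $p \in X$ the weights of the induced action on $T_pX$ are precisely $w_{p,i}/w$ ranging over those $i$ with $w \mid w_{p,i}$.

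To produce the label-$w$ edges with endpoints in $X$, I apply Proposition \ref{p22} to this induced action. If $X$ contains a fixed point carrying a tangent weight $\pm w$, then $1$ is the smallest positive weight of the induced $S^1$-action on $X$, so Proposition \ref{p22} yields, for every integer $j$,
\begin{equation*}
\sum_{p \in X \cap M^{S^1},\, n_p^X = j} N_p(w) \;=\; \sum_{q \in X \cap M^{S^1},\, n_q^X = j+1} N_q(-w),
\end{equation*}
where $n_p^X$ denotes the index of $p$ as a fixed point of the induced action on $X$. If no fixed point of $X$ carries a tangent weight $\pm w$, then the pairing equality $\sum N_p(w) = \sum N_p(-w)$ of \cite{H}, \cite{L} applied inside $X$ forces both sides above to vanish, so there is nothing to match for this $(w,X)$. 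For each $j$ I pick an arbitrary bijection between the two multisets of weight-occurrences on the two sides; each matched pair becomes a label-$w$ edge directed from the fixed point contributing the $+w$ to the one contributing the $-w$. Carrying this out for every pair $(w, X)$ yields a labeled directed multigraph $\Gamma$ on vertex set $M^{S^1}$.

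Properties (1)--(3) of Definition \ref{d12} are immediate from the construction: every occurrence of a weight $\pm w$ at $p$ is recorded as an incident label-$w$ edge, and the matching is always performed within a single component of $M^{\mathbb{Z}/(w)}$. The edge-index condition of Lemma \ref{l26} is built directly into the matching. Finally, a self-loop would force a single vertex to simultaneously have two distinct indices in $X$, which is impossible, so $\Gamma$ has no self-loops. No step here is a substantial obstacle; the only insight is that Proposition \ref{p22}, which on $M$ itself controls only the smallest positive weight, in fact controls \emph{every} weight $w$ once applied to the quotient action on each component of $M^{\mathbb{Z}/(w)}$, where $w$ becomes the smallest positive weight after rescaling.
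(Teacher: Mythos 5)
Your proposal is correct and is essentially the proof from the cited source \cite{JT}, which this paper does not reproduce: the key step of applying Proposition \ref{p22} to the induced action on each connected component of $M^{\mathbb{Z}/(w)}$, so that the weight $w$ rescales to the smallest positive weight $1$, is exactly the mechanism the paper alludes to in the discussion following the lemma (where, in dimension $4$, the component is moreover a $2$-sphere). The only cosmetic point is that when a component $X$ carries the tangent weight $-w$ but not $+w$, you need the equality $\sum_p N_p(w)=\sum_p N_p(-w)$ applied on $X$ to conclude that $+1$ really is the smallest positive induced weight before invoking Proposition \ref{p22}; you cite that lemma anyway in the complementary case, so this is a matter of ordering rather than a gap.
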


We shall explore a multigraph satisfying the conditions in Lemma \ref{l26} in dimension 4. Consider a circle action on a 4-dimensional compact almost complex manifold $M$ with a discrete fixed point set. Without loss of generality, by quotienting out by the subgroup that acts trivially, we may assume that the action is effective. Suppose that a fixed point $p$ has a weight $+w$ for some positive integer $w$ such that $w>1$. The group $\mathbb{Z}_w$ acts on $M$ as a subgroup of $S^1$. The set $M^{\mathbb{Z}_w}$ of points in $M$ that are fixed by the $\mathbb{Z}_w$-action is a union of smaller dimensional compact almost complex submanifolds. Let $F$ be a connected component of $M^{\mathbb{Z}_w}$ that contains $p$. Then $\dim F=2m$ if and only if $p$ has exactly $m$ weights that are divisible by $w$. Since the action is effective, $\dim M=4$, and $p$ has the weight $w>1$, it follows that $\dim F=2$. The circle group acts on $F$ as a restriction, and the $S^1$-action on $F$ has $p$ as a fixed point. This implies that $F$ is a 2-sphere, since among compact oriented Riemann surfaces, only $S^2$ admits a circle action with a fixed point. Since any circle action on $S^2$ has 2 fixed points, the $S^1$-action on $F$ has another fixed point $q$ that has $-w$ as a weight of the $S^1$-action; hence the original circle action on $M$ also has a weight $-w$ at $q$. Draw an edge from $p$ to $q$ with label $w$. By (4) of Definition \ref{d11}, $(p,q)$ is $w$-edge. Alternatively, $(q,p)$ is $(-w)$-edge. In \cite{Ka}, the $(p,q)$-edge is called a $\mathbb{Z}_w$-sphere.

By Proposition \ref{p22}, the number of times the weight $+1$ occurs at fixed points of index $i$ is equal to the number of times the weight $-1$ occurs at fixed points of index $i+1$, for $i \in \{0,1\}$. Therefore, each time some fixed point $p$ has the weight $+1$, there exists another fixed point $q$ that has the weight $-1$ with $n_p+1=n_q$; draw an edge from $p$ to $q$, giving a label $1$. This is how we draw a 2-regular labeled directed multigraph describing $M$ with no self-loops in Lemma \ref{l26}.

The above discussion implies the below lemma.

\begin{lem} \label{2sphere}
Let the circle act effectively on a 4-dimensional compact almost complex manifold $(M,J)$ with a discrete fixed point set. Let $\Gamma$ be a labeled directed multigraph describing $M$. Let $w>1$ be an integer. Suppose that $(p,q)$ is $w$-edge of $\Gamma$. Then $p$ and $q$ lie in the same 2-sphere $(S^2,J)$, which is a component of $M^{\mathbb{Z}_w}$. The circle acts on $(S^2,J)$ by rotating $w$-times, having fixed points $p$ and $q$ with weights $w$ and $-w$ for this action.
\end{lem}

Thus, any edge with label bigger than 1 corresponds to an isotropy 2-sphere. Next, we discuss properties of the multigraph in Lemma \ref{l26} that we will use later. For this, we introduce more terminologies.

\begin{Definition}
\begin{enumerate}[(1)]
\item Let $\Gamma=(V,E)$ be a directed multigraph. For a vertex $v$ of $\Gamma$, the \textbf{index} $n_v$ of $v$ is the number of edges whose terminal vertex is $v$. The \textbf{Todd genus} $T(\Gamma)$ of $\Gamma$ is the number of vertices of index 0.
\item A labeled directed multigraph $\Gamma$ is called \textbf{effective}, if for each vertex $v$, the greatest common divisor of the labels of the edges of $v$ is equal to 1.
\item An $n$-regular labeled directed multigraph is said to satisfy the \textbf{equal modulo property}, if $(p,q)$ is $w$-edge, then the weights at $p$ and the weights at $q$ are equal modulo $w$.
\item A 2-regular effective labeled directed multigraph $\Gamma$ is called \textbf{admissible} if for every edge $e$ with label $1$, the index of the initial vertex of $e$ is one less than the index of the terminal vertex of $e$, that is, $n_{i(e)}+1=n_{t(e)}$.
\end{enumerate}
\end{Definition}

Note that a 2-regular admissible effective directed labeled multigraph necessarily has no self-loops. We show that the multigraph in Lemma \ref{l26} is admissible and effective and satisfies the equal modulo property.

\begin{lem} \label{graphequal}
Let the circle act effectively on a 4-dimensional compact almost complex manifold $M$ with a discrete fixed point set. Then there exists a labeled directed multigraph describing $M$ that is admissible and effective and satisfies the equal modulo property.
\end{lem}

\begin{proof}
By Lemma \ref{l26}, there exists a labeled directed multigraph $\Gamma$ describing $M$ such that, for each edge $e$, the index of $i(e)$ in the isotropy submanifold $M^{\mathbb{Z}/(w(e))}$ is one less than the index of $t(e)$ in $M^{\mathbb{Z}/(w(e))}$. 

Let $e$ be an edge with label 1. Then $\mathbb{Z}/(w(e))$ is the trivial group and thus $M^{\mathbb{Z}/(w(e))}=M$. Therefore, the index of the vertex $i(e)$, which is the index of the fixed point $i(e)$ in $M$, is one less than the index of the vertex $t(e)$, which is the index of the fixed point $t(e)$ in $M$. Thus, $\Gamma$ is admissible.

Since the action is effective, the greatest common divisor of the weights at each fixed point is 1 and hence $\Gamma$ is effective.

We show that $\Gamma$ satisfies the equal modulo property. Let $e$ be an edge. If the label $w(e)$ of $e$ is 1, then the weights at $i(e)$ and the weights at $t(e)$ are equal modulo $w(e)$. Suppose that $w(e)>1$. Since $\Gamma$ describes $M$, the fixed points $i(e)$ and $t(e)$ lie in the same component of $M^{\mathbb{Z}/(w(e))}$. By Lemma \ref{l21}, the weights at $i(e)$ and the weights at $t(e)$ are equal modulo $w(e)$.
\end{proof}

\begin{rem}
One may extend the definition of admissibility for a 2-regular effective labeled directed multigraph to one for an $n$-regular effective labeled directed multigraph; then the proof of Lemma \ref{graphequal} applies to an arbitrary dimensional compact almost complex $S^1$-manifold with a discrete fixed point set.
\end{rem}

The following lemma shows that we can equivariantly glue together two 4-dimensional compact connected almost complex manifolds equipped with $\mathbb{T}^2$-actions along their free orbits. For a $\mathbb{T}^2$-action on an almost complex manifold $M$, we denote by $\Sigma_M^{\mathbb{T}^2}$ the collection of multisets of the $\mathbb{T}^2$-weights at the fixed points of $M$.

\begin{lem} \label{l27} For $i=1,2$, let $M_i$ be a 4-dimensional compact connected almost complex manifold equipped with an effective $\mathbb{T}^2$-action. Then we can perform equivariant connected sum along free orbits of $M_i$ to construct a 4-dimensional compact connected almost complex manifold $M$ equipped with an effective $\mathbb{T}^2$-action, such that the action has $\Sigma_{M_1}^{\mathbb{T}^2} \sqcup \Sigma_{M_2}^{\mathbb{T}^2}$ as the collection of multisets of the $\mathbb{T}^2$-weights at the fixed points of $M$.

In addition, suppose that the following hold:
\begin{enumerate}
\item For $i=1,2$, for any fixed point $p$ in $M_i$, the weights at $p$ are not zero for the action of the first $S^1$-factor of $\mathbb{T}^2$ on $M_i$.
\item For $i=1,2$, a labeled directed multigraph $\Gamma_i$ describes $M_i$ for the action of the first $S^1$-factor of $\mathbb{T}^2$ on $M_i$. 
\end{enumerate}
Then the disjoint union of the two multigraphs $\Gamma_1$ and $\Gamma_2$ describes $M$ for the action of the first $S^1$-factor of $\mathbb{T}^2$. \end{lem}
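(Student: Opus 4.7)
The plan is to carry out a $\mathbb{T}^2$-equivariant connected sum along free orbits of $M_1$ and $M_2$. Because each $\mathbb{T}^2$-action is effective on a $4$-manifold, the union of isotropy submanifolds for non-trivial subgroups of $\mathbb{T}^2$ is a proper closed subset, so the principal stratum of free orbits is open and dense; pick a free orbit $\mathcal{O}_i \subset M_i$ for each $i$. By the equivariant tubular neighborhood theorem, a neighborhood of $\mathcal{O}_i$ is equivariantly diffeomorphic to $\mathbb{T}^2 \times D^2$, with $\mathbb{T}^2$ acting by left translation on the first factor and trivially on the second. Remove a small open piece $\mathbb{T}^2 \times \mathring{D}^2(r)$ from each $M_i$ and glue the resulting boundary tori $\mathbb{T}^2 \times S^1(r)$ along an orientation-reversing, $\mathbb{T}^2$-equivariant diffeomorphism, producing a compact connected smooth $4$-manifold $M$ with a smooth $\mathbb{T}^2$-action. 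Effectiveness of the action on $M$ follows from effectiveness on $M_1$, since the action on $M$ restricts to the original on the image of $M_1 \setminus N_1$. Because we only delete free orbits, the fixed-point set of $M$ is precisely $M_1^{\mathbb{T}^2} \sqcup M_2^{\mathbb{T}^2}$ with the original $\mathbb{T}^2$-weights, yielding $\Sigma_{M_1}^{\mathbb{T}^2} \sqcup \Sigma_{M_2}^{\mathbb{T}^2}$.

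To upgrade $M$ to an almost complex $\mathbb{T}^2$-manifold, note that outside a thin collar of the gluing locus the original structures $J_1$ and $J_2$ survive. Inside the collar, identified equivariantly with $\mathbb{T}^2 \times A$ for an annulus $A$, both $J_1$ and $J_2$ are $\mathbb{T}^2$-invariant and induce the same orientation because the gluing was orientation-reversing. As the space of $\mathbb{T}^2$-invariant, orientation-compatible almost complex structures on this collar is contractible, one can interpolate between $J_1$ and $J_2$ by a $\mathbb{T}^2$-invariant family supported in the collar, producing an invariant almost complex structure $J$ on $M$ and completing the first part of the lemma.

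For the additional statement, assume (1) and (2). Condition (1) says that for every $p \in M_i^{\mathbb{T}^2}$ the first coordinate of each $\mathbb{T}^2$-weight is non-zero, so $p$ is an isolated fixed point of the first $S^1$-factor with $S^1$-weights $(w_{p,1}, w_{p,2})$. Hence the first $S^1$-fixed set on $M$ equals $M^{\mathbb{T}^2} = M_1^{\mathbb{T}^2} \sqcup M_2^{\mathbb{T}^2}$, which is discrete, and the $S^1$-weights at each $p$ are exactly those encoded in $\Gamma_1$ or $\Gamma_2$, verifying (1) and (2) of Definition \ref{d12}. Since the connected sum is performed along free $\mathbb{T}^2$-orbits, which are in particular free for every non-trivial subgroup $\mathbb{Z}/w \subset S^1 \subset \mathbb{T}^2$ with $w > 1$, the isotropy submanifold $M^{\mathbb{Z}/w}$ for such $w$ sits inside $M$ as the disjoint union of $M_1^{\mathbb{Z}/w}$ and $M_2^{\mathbb{Z}/w}$, and its connected components are preserved. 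Thus condition (3) of Definition \ref{d12} for $\Gamma_1 \sqcup \Gamma_2$ holds for every edge of label $w > 1$; for $w = 1$ it is automatic since $M^{\mathbb{Z}/1} = M$ is connected.

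The principal technical step is the construction of the invariant almost complex structure on the glued manifold; the rest is bookkeeping against Definition \ref{d12}. The interpolation relies on the contractibility of the space of invariant orientation-compatible almost complex structures on the collar, which is standard because no integrability condition is required.
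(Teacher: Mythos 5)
Your construction is the same as the paper's: an equivariant connected sum along free $\mathbb{T}^2$-orbits, with the fixed-point and weight bookkeeping identical (your verification of condition (3) of Definition \ref{d12} is in fact more explicit than the paper's, and is correct since the deleted tubular neighborhoods consist entirely of free orbits and hence miss every $M_i^{\mathbb{Z}/w}$ with $w>1$). The one step whose justification is wrong as stated is the interpolation of the almost complex structures on the neck. The space of $\mathbb{T}^2$-invariant, orientation-compatible almost complex structures on the collar $\mathbb{T}^2\times A$ is \emph{not} contractible: at each point such structures form $GL^{+}(4,\mathbb{R})/GL(2,\mathbb{C})\simeq SO(4)/U(2)\cong S^2$, so by invariance the space in question is homotopy equivalent to $\mathrm{Map}(A,S^2)\simeq \mathrm{Map}(S^1,S^2)$, the free loop space of $S^2$, which is connected but far from contractible; and the absence of an integrability condition is irrelevant to this. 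What you actually need, and what is true, is only that the restrictions of $J_1$ and $J_2$ to the two ends of the neck are homotopic through invariant structures: by invariance each is determined by a map from the orbit circle $S^1$ to $GL(4,\mathbb{R})/GL(2,\mathbb{C})\simeq S^2\sqcup S^2$, and once the two maps land in the same component (which the paper arranges by reversing the orientation of $M_2$ if necessary), they are homotopic because $\pi_1(S^2)=0$; the homotopy then furnishes the interpolating invariant structure over the annulus. This is precisely the paper's argument, and the reduction to the orbit space is essential: without it one would be comparing maps from the $3$-manifold $\mathbb{T}^2\times S^1$ to $S^2$, and $[\mathbb{T}^2\times S^1,S^2]$ is nontrivial, so the extension could genuinely be obstructed. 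So the gap is in the stated reason, not in the route; replacing ``contractible'' by the connectedness statement above (which your own invariance observation already gives you) repairs the proof.
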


\begin{proof} For each $i=1,2$, take an equivariant tubular neighborhood $U_i \approx D^2 \times \mathbb{T}^2$ of a free orbit of the $\mathbb{T}^2$-action on $M_i$. For each $i=1,2$, the almost complex structure $J_i$ on $M_i$ is $\mathbb{T}^2$-invariant, and hence on $U_i$ and on the boundary of $U_i$. Denote by $B_i =\partial U_i \approx S^1 \times \mathbb{T}^2$ the boundary of $U_i$.

For $i=1,2$, the complex structure on $B_i$ is classified by a continuous map from $B_i$ to $GL(4,\mathbb{R})/GL(2,\mathbb{C})$, which is homotopy equivalent to $S^2 \sqcup S^2$. Since $\mathbb{T}^2$ acts on $B_i$ freely and $J_i$ invariantly, the orbit space is $S^1$ and the map factors through $B_i \to S^1 \to GL(4,\mathbb{R})/GL(2,\mathbb{C})$. Choose a circle $S$ in the interior of $D^2$ of $U_2 \approx D^2 \times \mathbb{T}^2$ and let $C_2 \approx S \times \mathbb{T}^2 \subset U_2$.

We glue $B_1$ and $C_2$ equivariantly. Denote by $\phi$ the equivariant diffeomorphism between $B_1$ and $C_2$. By reversing the orientation of $M_2$ if necessary, the complex structure $J'$ on $C_2$ induced by $J_1$ via the equivariant diffeomorphism $\phi$ and the complex structure $J_2$ on $B_2$ lie in the same component of $GL(4,\mathbb{R})/GL(2,\mathbb{C}) \approx S^2 \sqcup S^2$. Therefore, between $C_2$ and $B_2$, we can perturb the almost complex structure $J_2$ on $B_2$ to agree with the induced complex structure $J'$ on $C_2$. Therefore, when we take the equivariant connected sum of $M_1$ and $M_2$ by gluing $B_1$ and $C_2$ equivariantly, the resulting manifold $M$ admits an almost complex structure.

On the other hand, since the equivariant connected sum is along free orbits of $M_1$ and $M_2$, it follows that the fixed point set of the $\mathbb{T}^2$-action on $M$ is the union $M_1^{\mathbb{T}^2} \sqcup M_2^{\mathbb{T}^2}$ of the fixed point sets of $M_1$ and $M_2$, and the weights at the fixed points of the $\mathbb{T}^2$-action on $M$ is $\Sigma_{M_1}^{\mathbb{T}^2} \sqcup \Sigma_{M_2}^{\mathbb{T}^2}$. This proves the first claim.

The second claim follows from the construction of $M$. For $i=1,2$, for any fixed point $p$ in $M_i$, the weights at $p$ are not zero for the action of the first $S^1$-factor of $\mathbb{T}^2$ on $M_i$. Then the action of the first $S^1$-factor of $\mathbb{T}^2$ on $M$ has the same fixed point set as the $\mathbb{T}^2$-action $M$, that is, $M^{S^1}=M^{\mathbb{T}^2}$. Hence if $\Gamma_i$ describes $M_i$ for the action of the first $S^1$-factor of $\mathbb{T}^2$-action on $M_i$ for $i=1,2$, then $\Gamma_1 \sqcup \Gamma_2$ describes $M$ for the action of the first $S^1$-factor of $\mathbb{T}^2$-action on $M$. \end{proof}

\section{Classification of fixed point data in terms of multigraphs} \label{s3}

In this section, we present a minimal multigraph and operations for a multigraph describing a circle action on a 4-dimensional compact almost complex manifold with a discrete fixed point set (Theorem \ref{t11}).
We shall establish preliminary results in Sections \ref{plumbing} and \ref{extends} and then prove Theorem \ref{t11} in Section \ref{s7}.

A group action on a manifold is called \textbf{semi-free}, if the action is free outside the fixed point set. For a semi-free circle action on an almost complex manifold with a discrete fixed point set, every weight at a fixed point is either $+1$ or $-1$. Accordingly, we introduce a notion for a multigraph for a semi-free action.

\begin{Definition} \label{d13}
A labeled directed multigraph $\Gamma$ is called \textbf{semi-free} if every edge of $\Gamma$ has label $1$.
\end{Definition}

By definition, if $\Gamma$ is a 2-regular admissible semi-free multigraph with $T(\Gamma)=k$, then $\Gamma$ has $k$ vertices of index 0, $2k$ vertices of index 1, and $k$ vertices of index 2. Figure \ref{fig1} is an example of a connected 2-regular admissible semi-free multigraph $\Gamma$ with $T(\Gamma)=2$. The following theorem states that a multigraph describing a 4-dimensional compact almost complex $S^1$-manifold with a discrete fixed point set can be obtained from a minimal multigraph (a 2-regular admissible semi-free multigraph) and four types of operations on a multigraph.

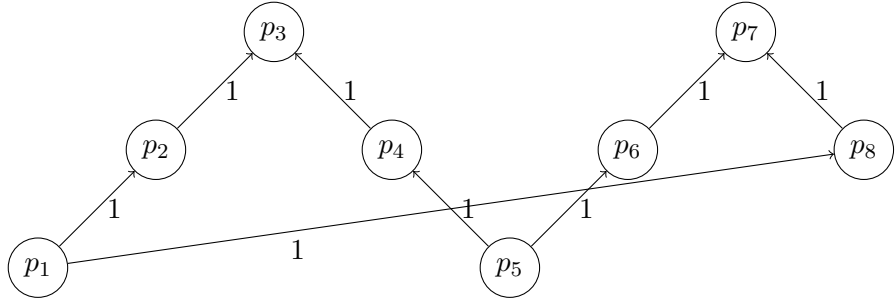
\begin{figure}
\centering
\begin{subfigure}[b][3.5cm][s]{.9\textwidth}
\centering
\vfill
\begin{tikzpicture}[state/.style ={circle, draw}]
\node[state] (A) {$p_1$};
\node[state] (B) [above right=of A] {$p_2$};
\node[state] (C) [above right=of B] {$p_3$};
\node[state] (D) [below right=of C] {$p_4$};
\node[state] (E) [below right=of D] {$p_5$};
\node[state] (F) [above right=of E] {$p_6$};
\node[state] (G) [above right=of F] {$p_7$};
\node[state] (H) [below right=of G] {$p_8$};
\path (A) [->] edge node[right] {$1$} (B);
\path (B) [->] edge node [right] {$1$} (C);
\path (D) [->] edge node [right] {$1$} (C);
\path (E) [->] edge node [right] {$1$} (D);
\path (E) [->] edge node [right] {$1$} (F);
\path (F) [->] edge node [right] {$1$} (G);
\path (H) [->] edge node [right] {$1$} (G);
\path (A) [->] edge node [pos=.3, below] {$1$} (H);
\end{tikzpicture}
\vspace{\baselineskip}
\end{subfigure}\qquad
\caption{2-regular connected semi-free multigraph $\Gamma$ with $T(\Gamma)=2$}\label{fig1}
\end{figure}

\begin{theo} \label{t11} Let the circle act effectively on a 4-dimensional compact almost complex manifold $M$ with a discrete fixed point set. Then a labeled directed multigraph describing $M$ can be constructed in the following way: begin with a disjoint union of connected 2-regular admissible semi-free multigraphs $\Gamma_j$, $1 \leq j \leq k$, for some positive integer $k$. Then, apply a combination of the following 4 operations to each $\Gamma_j$:
\begin{enumerate}[(1)]
\item Suppose that $(p_{i-1},p_i)$ is $a$-edge and $(p_i,p_{i+1})$ is $b$-edge, for some vertices $p_{i-1}$, $p_i$, $p_{i+1}$ and for some positive integers $a$, $b$. Then replace $p_i$ with $(a+b)$-edge $(p_i',p_i'')$ so that $(p_{i-1},p_i')$ is $a$-edge and $(p_i'',p_{i+1})$ is $b$-edge (Figure \ref{fig2}).
\item Suppose that $(p_i,p_{i-1})$ is $c$-edge, $(p_{i+1},p_i)$ is $d$-edge, and $(p_{i+1},p_{i+2})$ is $c$-edge, for some vertices $p_{i-1}$, $p_i$, $p_{i+1}$, $p_{i+2}$ and for some positive integers $c$, $d$. Then replace the label of the $(p_{i+1},p_i)$-edge with $d+c$ (Figure \ref{fig3}).
\item Suppose that $(p_{i-1},p_i)$ is $e$-edge, $(p_i,p_{i+1})$ is $f$-edge, and $(p_{i+2},p_{i+1})$ is $e$-edge, for some vertices $p_{i-1}$, $p_i$, $p_{i+1}$, $p_{i+2}$, and for some positive integers $e$, $f$. Then replace the label of the $(p_i,p_{i+1})$-edge with $f+e$ (Figure \ref{fig4}).
\item Suppose that $(p_{i-1},p_{i-2})$ is $g$-edge, $(p_{i-1},p_i)$ is $h$-edge, $(p_{i},p_{i+1})$ is $g$-edge, and $(p_{i+2},p_{i+1})$ is $h$-edge, for some vertices $p_{i-1}$, $p_i$, $p_{i+1}$, $p_{i+2}$, and for some positive integers $g$, $h$. Then replace $(p_{i-1},p_i)$-edge and $(p_i,p_{i+1})$-edge with $(p_{i-1},p_{i+1})$-edge, removing $p_i$; the new $(p_{i-1},p_{i+1})$-edge has label $(g+h)$ (Figure \ref{fig5}).
\end{enumerate}
The Todd genus of $M$ is $\textrm{Todd}(M)=\sum_{j=1}^k T(\Gamma_j)$. Moreover, given any 2-regular labeled directed multigraph $\Gamma$ constructed from a 2-regular admissible semi-free multigraph followed by a combination of the 4 operations in this theorem, there exists a 4-dimensional compact connected almost complex manifold $M$ equipped with a circle action having a discrete fixed point set that is described by $\Gamma$.
\end{theo}

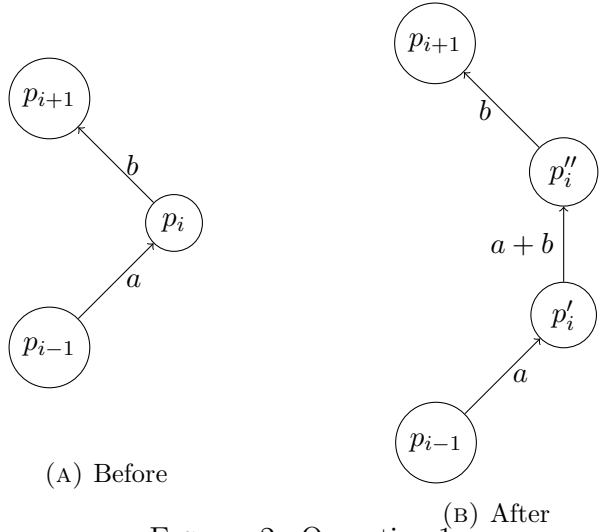
\begin{figure}
\centering
\begin{subfigure}[b][6.5cm][s]{.4\textwidth}
\centering
\vfill
\begin{tikzpicture}[state/.style ={circle, draw}]
\node[state] (a) {$p_{i-1}$};
\node[state] (b) [above right=of a] {$p_i$};
\node[state] (c) [above left=of b] {$p_{i+1}$};
\path (a) [->] edge node[right] {$a$} (b);
\path (b) [->] edge node [right] {$b$} (c);
\end{tikzpicture}
\vfill
\caption{Before}\label{fig2-1}
\end{subfigure}
\begin{subfigure}[b][6.5cm][s]{.4\textwidth}
\centering
\vfill
\begin{tikzpicture}[state/.style ={circle, draw}]
\node[state] (a) {$p_{i-1}$};
\node[state] (b) [above right=of a] {$p_{i}'$};
\node[state] (c) [above=of b] {$p_i''$};
\node[state] (d) [above left=of c] {$p_{i+1}$};
\path (a) [->] edge node[right] {$a$} (b);
\path (b) [->] edge node [left] {$a+b$} (c);
\path (c) [->] edge node [left] {$b$} (d);
\end{tikzpicture}
\vfill
\caption{After}\label{fig2-2}
\vspace{\baselineskip}
\end{subfigure}\qquad
\caption{Operation 1}\label{fig2}
\end{figure}

\begin{figure}
\centering
\begin{subfigure}[b][4.5cm][s]{.4\textwidth}
\centering
\vfill
\begin{tikzpicture}[state/.style ={circle, draw}]
\node[state] (a) {$p_{i-1}$};
\node[state] (b) [below left=of a] {$p_i$};
\node[state] (c) [below right=of b] {$p_{i+1}$};
\node[state] (d) [above right=of c] {$p_{i+2}$};
\path (b) [->] edge node[right] {$c$} (a);
\path (c) [->] edge node [right] {$d$} (b);
\path (c) [->] edge node [right] {$c$} (d);
\end{tikzpicture}
\vfill
\caption{Before}\label{fig3-1}
\vspace{\baselineskip}
\end{subfigure}\qquad
\begin{subfigure}[b][4.5cm][s]{.4\textwidth}
\centering
\vfill
\begin{tikzpicture}[state/.style ={circle, draw}]
\node[state] (a) {$p_{i-1}$};
\node[state] (b) [below left=of a] {$p_i$};
\node[state] (c) [below right=of b] {$p_{i+1}$};
\node[state] (d) [above right=of c] {$p_{i+2}$};
\path (b) [->] edge node[right] {$c$} (a);
\path (c) [->] edge node [right] {$d+c$} (b);
\path (c) [->] edge node [right] {$c$} (d);
\end{tikzpicture}
\vfill
\caption{After}\label{fig3-2}
\vspace{\baselineskip}
\end{subfigure}\qquad
\caption{Operation 2}\label{fig3}
\end{figure}
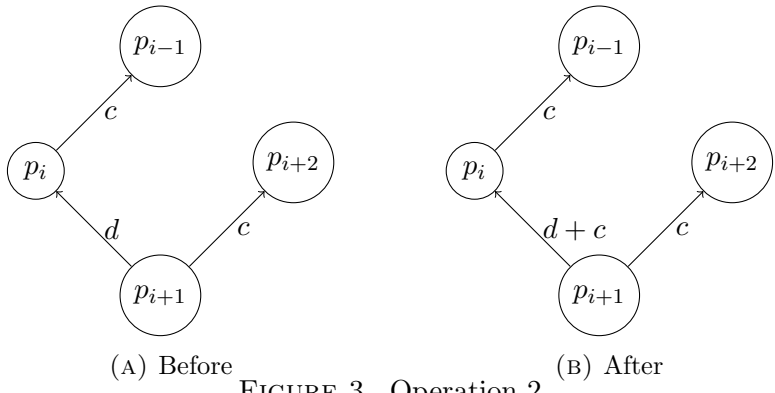

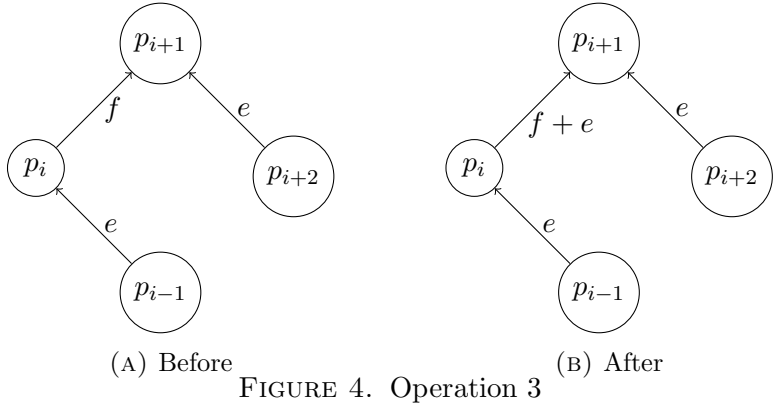
\begin{figure}
\centering
\begin{subfigure}[b][4.5cm][s]{.4\textwidth}
\centering
\vfill
\begin{tikzpicture}[state/.style ={circle, draw}]
\node[state] (a) {$p_{i+2}$};
\node[state] (b) [above left=of a] {$p_{i+1}$};
\node[state] (c) [below left=of b] {$p_{i}$};
\node[state] (d) [below right=of c] {$p_{i-1}$};
\path (a) [->] edge node[right] {$e$} (b);
\path (c) [->] edge node [right] {$f$} (b);
\path (d) [->] edge node [right] {$e$} (c);
\end{tikzpicture}
\vfill
\caption{Before}\label{fig4-1}
\vspace{\baselineskip}
\end{subfigure}\qquad
\begin{subfigure}[b][4.5cm][s]{.4\textwidth}
\centering
\vfill
\begin{tikzpicture}[state/.style ={circle, draw}]
\node[state] (a) {$p_{i+2}$};
\node[state] (b) [above left=of a] {$p_{i+1}$};
\node[state] (c) [below left=of b] {$p_{i}$};
\node[state] (d) [below right=of c] {$p_{i-1}$};
\path (a) [->] edge node[right] {$e$} (b);
\path (c) [->] edge node [pos=.3, right] {$f+e$} (b);
\path (d) [->] edge node [right] {$e$} (c);
\end{tikzpicture}
\vfill
\caption{After}\label{fig4-2}
\vspace{\baselineskip}
\end{subfigure}\qquad
\caption{Operation 3}\label{fig4}
\end{figure}

\begin{figure}
\centering
\begin{subfigure}[b][5.2cm][s]{.4\textwidth}
\centering
\vfill
\begin{tikzpicture}[state/.style ={circle, draw}]
\node[vertex] (a) at (3, 1) {$p_{i+2}$};
\node[vertex] (b) at (1, 2) {$p_{i+1}$};
\node[vertex] (c) at (0, 0) {$p_{i}$};
\node[vertex] (d) at (1, -2) {$p_{i-1}$};
\node[vertex] (e) at (3, -1) {$p_{i-2}$};
\path (a) [->] edge node[below] {$h$} (b);
\path (c) [->] edge node [right] {$g$} (b);
\path (d) [->] edge node [right] {$h$} (c);
\path (d) [->] edge node [above] {$g$} (e);
\end{tikzpicture}
\vfill
\caption{Before}\label{fig5-1}
\vspace{\baselineskip}
\end{subfigure}\qquad
\begin{subfigure}[b][5.2cm][s]{.4\textwidth}
\centering
\vfill
\begin{tikzpicture}[state/.style ={circle, draw}]
\node[vertex] (a) at (3, 1) {$p_{i+2}$};
\node[vertex] (b) at (1, 2) {$p_{i+1}$};
\node[vertex] (d) at (1, -2) {$p_{i-1}$};
\node[vertex] (e) at (3, -1) {$p_{i-2}$};
\path (a) [->] edge node[below] {$h$} (b);
\path (d) [->] edge node [right] {$g+h$} (b);
\path (d) [->] edge node [above] {$g$} (e);
\end{tikzpicture}
\vfill
\caption{After}\label{fig5-2}
\vspace{\baselineskip}
\end{subfigure}\qquad
\caption{Operation 4}\label{fig5}
\end{figure}
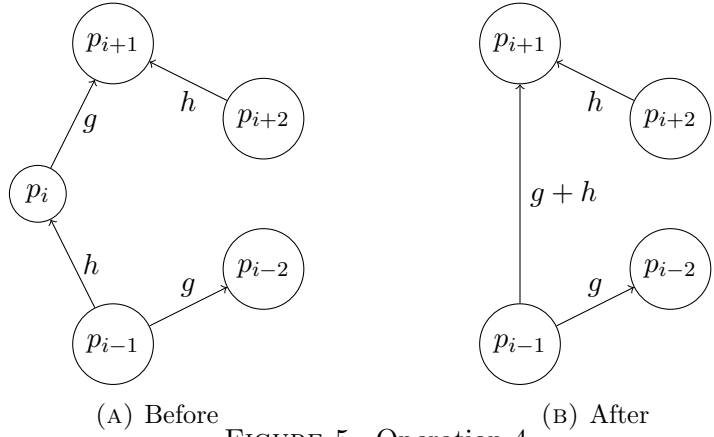

Note that for an effective circle action on a 4-dimensional compact almost complex manifold $M$ with a discrete fixed point set, Theorem \ref{t11} does not imply that there is a unique multigraph describing $M$. Let $w>1$ be an integer. If a fixed point $p$ has the weight $+w$, a fixed point $q$ has the weight $-w$, and $p$ and $q$ lie in the same connected component of $M^{\mathbb{Z}_w}$, then for any multigraph describing $M$, (3) of Definition \ref{d12} means that $(p,q)$ must be $w$-edge. However, if $\pm 1$ occur as weights at some fixed points, there may be many possibilities for drawing edges with label 1. For instance, suppose that a manifold $M$ is described by Figure \ref{fig1}. Then $M$ is also described by a disjoint union of two copies of Figure \ref{fig6} below. However, a multigraph describing $M$ is unique, up to redrawing edges of label 1. Moreover, a multigraph can describe two different manifolds; see Remark \ref{r34}.

\section{Equivariant plumbing} \label{plumbing}

To prove any existence result in this paper for an $S^1$-action on a 4-dimensional almost complex manifold, we shall, in fact, construct a $\mathbb{T}^2$-action on such a manifold. Then, any such result for an $S^1$-action can be attained by restricting the $\mathbb{T}^2$-action to the action of the first $S^1$-factor. To construct this $\mathbb{T}^2$-action, we will utilize equivariant plumbing, which is the primary focus of this section.

Given a sequence of vectors in $\mathbb{Z}^2$ in counterclockwise order, Masuda \cite{M} used the equivariant plumbing to construct a 4-dimensional compact almost complex manifold equipped with a $\mathbb{T}^2$-action. We shall review and reformulate the construction and the proof.

\begin{theo} \label{t31} Let $v_0,v_1,\cdots,v_{k+1}$ be a sequence of vectors in $\mathbb{Z}^2$ in counterclockwise order with $v_k=v_0$ and $v_{k+1}=v_1$. Suppose that 
\begin{enumerate}
\item[(a)] Each successive pair $v_i$ and $v_{i+1}$ is a basis of $\mathbb{Z}^2$, for $1 \leq i \leq k$.
\item[(b)] For $1 \leq i \leq k$, there exists an integer $a_i$ such that $v_{i+1}=-a_i v_i-v_{i-1}$ for each $1 \leq i \leq k$.
\end{enumerate}
Then there exists a 4-dimensional compact, connected almost complex manifold $M$ equipped with an effective $\mathbb{T}^2$-action having $k$ fixed points $p_1,\cdots,p_k$ such that the $\mathbb{T}^2$-weights at $p_i$ are $\{v_i,-v_{i-1}\}$ for $1 \leq i \leq k$.

In addition, suppose that $v_{i,1} \neq 0$ for all $i$, where $v_i=(v_{i,1},v_{i,2}) \in \mathbb{Z}^2$. Then the following hold. 
\begin{enumerate}[(1)]
\item As the first $S^1$-factor of $\mathbb{T}^2$, the $S^1$-action on $M$ has the same fixed point set as the $\mathbb{T}^2$-action, and the $S^1$-weights at $p_i$ are $\{v_{i,1},-v_{i-1,1}\}$ for $1 \leq i \leq k$.
\item Let $\Gamma$ be a labeled directed multigraph such that $(p_i,p_{i+1})$ is $v_{i,1}$-edge for $1 \leq i \leq k$. Then $\Gamma$ describes $M$ for the action of the first $S^1$-factor. Here, $p_{k+1}=p_1$. 
\end{enumerate}

Finally, if the winding number of $v_0,v_1,\cdots,v_{k}$ is one, then there exists such a manifold $M$ that admits a K\"ahler structure preserved by the $\mathbb{T}^2$-action.
\end{theo}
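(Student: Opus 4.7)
The plan is to follow Masuda's equivariant plumbing construction. For each $i\in\{1,\dots,k\}$, let $U_i\cong\mathbb{C}^2$ carry the $\mathbb{T}^2$-action whose characters on its two coordinate axes are $v_i$ and $-v_{i-1}$; hypothesis (1) that $(v_{i-1},v_i)$ is a $\mathbb{Z}$-basis of $\mathbb{Z}^2$ makes this action effective, and the origin of $U_i$ is the unique $\mathbb{T}^2$-fixed point, which I label $p_i$ and which has $\mathbb{T}^2$-weights $\{v_i,-v_{i-1}\}$. Writing $(x_i,y_i)$ for the coordinates on $U_i$ (with weights $v_i$ and $-v_{i-1}$), I glue $U_i$ to $U_{i+1}$ on punctured neighbourhoods of the $x_i$-axis and the $y_{i+1}$-axis via the toric transition
\begin{equation*}
y_{i+1}=x_i^{-1},\qquad x_{i+1}=y_i\,x_i^{a_i}.
\end{equation*}
Demanding equivariance at $p_{i+1}$ forces the $\mathbb{T}^2$-weight of $x_{i+1}$ to equal $-a_iv_i-v_{i-1}$, which is precisely $v_{i+1}$ by hypothesis (2). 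Since $v_k=v_0$, the chain of charts closes cyclically, so the plumbed space $M$ is a compact connected $4$-manifold carrying an effective $\mathbb{T}^2$-action whose fixed point set is $\{p_1,\dots,p_k\}$ with the prescribed weights.

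Each $U_i$ carries its canonical complex structure and each transition map above is holomorphic, so the local complex structures patch to a $\mathbb{T}^2$-invariant almost complex (in fact complex) structure on $M$. If one prefers to stay strictly in the almost-complex category, the $GL(4,\mathbb{R})/GL(2,\mathbb{C})\simeq S^2\sqcup S^2$ argument used in the proof of Lemma~\ref{l27} lets one perturb the gluing data and smoothly extend a $\mathbb{T}^2$-invariant almost complex structure across each plumbing annulus.

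For the second assertion, suppose $v_{i,1}\neq 0$ for every $i$. At $p_i$ the $S^1$-weights for the first $S^1$-factor of $\mathbb{T}^2$ are $\{v_{i,1},-v_{i-1,1}\}$, both nonzero, so $M^{S^1}=M^{\mathbb{T}^2}=\{p_1,\dots,p_k\}$. The plumbed $2$-sphere $S_i$ obtained from the $x_i$-axis of $U_i$ and the $y_{i+1}$-axis of $U_{i+1}$ is $\mathbb{T}^2$-invariant, hence $S^1$-invariant, and the $S^1$-action on $S_i\cong S^2$ has $p_i$ and $p_{i+1}$ as its two fixed points, with tangential $S^1$-weights $v_{i,1}$ and $-v_{i,1}$ respectively. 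Declaring $(p_i,p_{i+1})$ to be the $v_{i,1}$-edge for each $i$ therefore produces a multigraph $\Gamma$ satisfying both clauses of Definition~\ref{d31}, and $\Gamma$ describes $M$ in the strong sense. The principal obstacle throughout is verifying that the cyclic plumbing yields a smooth closed 4-manifold with a consistent $\mathbb{T}^2$-invariant almost complex structure; this rests on the algebraic identity $v_{i+1}+a_iv_i+v_{i-1}=0$, which both forces equivariance at $p_{i+1}$ and, in toric language, gives the sphere $S_i$ self-intersection $-a_i$, while the cyclic closure $v_k=v_0$ is exactly what distinguishes this closed manifold from an open chain of plumbed $\mathbb{C}^2$'s.
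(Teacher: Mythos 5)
Your local models, transition maps, and identification of the invariant spheres all match the paper's approach (Masuda's equivariant plumbing), but there is a genuine gap at the step where you assert that ``since $v_k=v_0$, the chain of charts closes cyclically, so the plumbed space $M$ is a compact connected $4$-manifold.'' Gluing full affine charts $U_i\cong\mathbb{C}^2$ along adjacent overlaps does not produce a compact space: each adjacent pair $U_i\cup U_{i+1}$ is the total space of a line bundle over $\mathbb{CP}^1$, and the cyclic union is only an open neighbourhood of the cycle of spheres. Worse, the theorem is applied to sequences $v_0,\dots,v_k$ that wind around the origin more than once (e.g.\ Lemma \ref{l32} with $T(\Gamma)=k\geq 2$ repeats the cone $\langle(1,0),(1,1)\rangle$ $k$ times). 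In that case the composite transition map from $U_1$ to the chart $U_j$ sharing the same cone is the identity on the dense torus $(\mathbb{C}^*)^2$, so your quotient identifies the open orbits of distant charts while keeping their origins distinct; the result is non-Hausdorff, not a manifold, and certainly does not have $k$ distinct fixed points. This is precisely the situation (a ``multi-fan'' that is not a fan) that the plumbing construction is designed to handle.

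The paper avoids both problems by plumbing the compact \emph{disk} bundles $D_{a_i}(v_i,-v_{i-1})$ rather than affine charts: the plumbed object $N$ is a compact manifold with boundary $\partial N\simeq S^1\times\mathbb{T}^2$, and one must then cap off by gluing in $D^2\times\mathbb{T}^2$ to obtain the closed manifold $M$. This capping step is absent from your proposal, and it is also where the almost complex structure requires care: the structure on the cap is only matched up to homotopy of $\mathbb{T}^2$-invariant almost complex structures on $S^1\times\mathbb{T}^2$ (the $GL(4,\mathbb{R})/GL(2,\mathbb{C})\simeq S^2\sqcup S^2$ argument you cite), which is why $M$ is in general only almost complex rather than complex, contrary to your parenthetical ``(in fact complex).'' To repair the proof you should replace the affine charts by disk bundles, note that the plumbed boundary is $S^1\times\mathbb{T}^2$, and carry out the equivariant capping as in Theorem 5.1 of \cite{M}.
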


\begin{proof}
Let $a$ be an integer. Let $O(a)$ be the holomorphic line bundle over $\mathbb{CP}^1$ whose self-intersection number of the zero section is $a$. That is, the quotient of $(\mathbb{C}^2-\{0\}) \times \mathbb{C}$ by the $C^*$-action given by 
\begin{center}
$g \cdot (z_1,z_2,w)=(gz_1,gz_2,g^aw)$,
\end{center}
for all $g \in C^*$ and $(z_1,z_2,w) \in (\mathbb{C}^2-\{0\}) \times \mathbb{C}$. Denote by $[z_1,z_2,w]$ the equivalence class of $(z_1,z_2,w)$. Let $\mathbb{T}^2$ act on $O(a)$ by 
\begin{center}
$t \cdot [z_1,z_2,w]=[z_1,t^{u_1}z_2,t^{u_2}w]$
\end{center}
for all $t=(t_1,t_2) \in \mathbb{T}^2$, for some $u_1,u_2 \in \mathbb{Z}^2$ that form a basis of $\mathbb{Z}^2$. Let $D_a(u_1,u_2)$ be the disk bundle of $O(a)$ with this $\mathbb{T}^2$-action. The action has two fixed points $q_1=[1,0,0]$ and $q_2=[0,1,0]$, and the $\mathbb{T}^2$-weights at the fixed points $q_1$ and $q_2$ are $\{u_1,u_2\}$ and $\{-u_1,-au_1+u_2\}$, respectively.

For $D_{a_i}(v_i,-v_{i-1})$, the $\mathbb{T}^2$-weights at $q_{i,2}$ are $\{-v_i,-a_i v_i-v_{i-1} \}$ and for $D_{a_{i+1}}(v_{i+1},-v_i)$, the $\mathbb{T}^2$-weights at $q_{i+1,1}$ are $\{v_{i+1},-v_i\}$, for $1 \leq i \leq k$. By the assumption, for $1 \leq i \leq k$, there exists an integer $a_i$ such that $v_{i+1}=-a_i v_i-v_{i-1}$. Hence the $\mathbb{T}^2$-weights at $q_{i,2}$ and $q_{i+1,1}$ agree for $1 \leq i \leq k$. Therefore, for every $1 \leq i \leq k$, we can equivariantly plumb two manifolds $D_{a_i}(v_i,-v_{i-1})$ and $D_{a_{i+1}}(v_{i+1},-v_i)$ at $q_{i,2}$ and $q_{i+1,1}$. Then we get a 4-dimensional compact connected manifold $N$ with the $\mathbb{T}^2$-action and with boundary $\partial N \simeq S^1 \times \mathbb{T}^2$. As in the proof of Theorem 5.1 of \cite{M}, by pasting $N$ and $D^2 \times \mathbb{T}^2$ along the boundary of $N$, we get a 4-dimensional closed connected almost complex $\mathbb{T}^2$-manifold $M$; see the detailed proof of Theorem 5.1 in \cite{M} that $M$ admits a $\mathbb{T}^2$-invariant almost complex structure. Moreover, $M$ has $k$ fixed points, and the $\mathbb{T}^2$-weights at each fixed point $p_i=q_{i,1}$ are $\{v_i,-v_{i-1}\}$ for each $1 \leq i \leq k$. This proves the first part.

Suppose that $v_{i,1} \neq 0$ for all $i$. Then the second part of this theorem for the restriction of $\mathbb{T}^2$ on the first $S^1$-factor follow from the construction. From the construction, the $S^1$-weights at $p_i$ are $\{v_{i,1},-v_{i-1,1}\}$ for $1 \leq i \leq k$. Moreover, from the construction it follows that for any $1 \leq i \leq k$, $p_i$($=q_{i,1}$) and $p_{i+1}$($=q_{i,2}=q_{i+1,1}$) lie in the same 2-sphere $S^2=\mathbb{CP}^1=\{[z_1,z_2,0]:(z_1,z_2) \in \mathbb{C}^2-\{0\}\} \subset D_{a_i}(v_i,v_{i-1})$, where the first $S^1$-factor of $\mathbb{T}^2$ restricts to act on this $S^2$ by $t_1 \cdot [z_1,z_2,0]=[z_1, t_1^{v_{i,1}}z_2,0]$ for all $t_1 \in S^1$, with fixed points $p_i$ and $p_{i+1}$ that have $S^1$-weights $v_{i,1}$ and $-v_{i,1}$, respectively. Therefore, if we draw a 2-regular labeled directed multigraph $\Gamma$ so that $(p_i,p_{i+1})$ is $v_{i,1}$-edge for $1 \leq i \leq k$, then $\Gamma$ describes $M$ for the action of the first $S^1$-factor of $\mathbb{T}^2$.

Finally, suppose that the winding number of $v_0,v_1,\cdots,v_{k}$ is one. Then $v_i$ form a fan and thus the manifold $M$ is a complete non-singular surface. This implies that $M$ is projective, and hence K\"ahler.
\end{proof}

\begin{rem}
There is a slight difference between constructions of $\mathbb{T}^2$-manifolds in Theorem \ref{t31} of this paper and Theorem 5.1 of \cite{M}. In Theorem 5.1 of \cite{M}, Masuda also took $v_i$ in counterclockwise order but then used the dual basis $\{u_1^{(i)},u_2^{(i)}\}$ of $\{v_{i-1},v_i\}$ to construct a $\mathbb{T}^2$-manifold. Let $\{u_1^{(i)},u_2^{(i)}\}$ be the dual basis of $\{v_{i-1},v_i\}$. Then Masuda used $D_{a_i}(u_1^{(i)},u_2^{(i)})$ to construct a manifold, where the $\mathbb{T}^2$-weights at $q_{i,1}$ and $q_{i,2}$ are $\{u_1^{(i)},u_2^{(i)}\}$ and $\{-u_1^{(i)},u_2^{(i)}-a_iu_1^{(i)}\}$, respectively. For $1 \leq i \leq k$, Condition (2) of Theorem \ref{t31} can be written as
\begin{center}
$\begin{pmatrix} v_{i} & v_{i+1} \end{pmatrix} = \begin{pmatrix} v_{i-1} & v_{i} \end{pmatrix} \begin{pmatrix} 0 & -1 \\ 1 & -a_i \end{pmatrix}$
\end{center}
Since $\{u_1^{(i)},u_2^{(i)}\}$ is the dual basis of $\{v_{i-1},v_i\}$, for each $1 \leq i \leq k$, the relationship between $\{u_1^{(i+1)}, u_2^{(i+1)}\}$ and $\{u_1^{(i)}, u_2^{(i)}\}$ are then
\begin{equation} \label{e1}
\begin{pmatrix} u_1^{(i+1)} \\ u_2^{(i+1)} \end{pmatrix} = \begin{pmatrix} -a_i & 1 \\ -1 & 0 \end{pmatrix} \begin{pmatrix} u_1^{(i)} \\ u_2^{(i)} \end{pmatrix}
\end{equation}
Since Equation \eqref{e1} holds, the $\mathbb{T}^2$-weights at $q_{i,2}$ and $q_{i+1,1}$ agree; hence we can equivariantly plumb two manifolds $D_{a_i}(u_1^{(i)},u_2^{(i)})$ and $D_{a_{i+1}}(u_1^{(i+1)},u_2^{(i+1)})$ at $q_{i,2}$ and $q_{i+1,1}$ for each $1 \leq i \leq k$.

On the other hand, instead of taking the dual basis of $\{v_i,v_{i-1}\}$ if we take $u_1^{(i)}=v_i$ and $u_2^{(i)}=-v_{i-1}$, then Equation \eqref{e1} also holds. Therefore, in Theorem \ref{t31} we directly used $D_{a_i}(v_i,-v_{i-1})$ to construct a $\mathbb{T}^2$-manifold without introducing $u_m^{(i)}$'s. Since the $\mathbb{T}^2$-weights at $q_{i,2}$ and $q_{i+1,1}$ also agree in our case, we can perform equivariant plumbing. Therefore, the proofs for the arguments in Theorem 5.1 of \cite{M} apply smoothly for the rest of Theorem \ref{t31} (such as, by pasting $N$ and $D^2 \times \mathbb{T}^2$ along the boundary $\partial N \simeq S^1 \times \mathbb{T}^2$ of $N$, we get a 4-dimensional closed connected almost complex $\mathbb{T}^2$-manifold $M$, and $M$ admits a $\mathbb{T}^2$-invariant almost complex structure, etc).
\end{rem}

\begin{rem} \label{r34}
Theorem \ref{t11} does not imply that if a multigraph describes a 4-dimensional almost complex $S^1$-manifold, the manifold is unique. That is, a multigraph can describe two different $S^1$-manifolds. For this, let $M$ be an $S^1$-manifold with 8 fixed points constructed from Theorem \ref{t31} by taking $v_1=(1,0)$, $v_2=(1,1)$, $v_3=(-1,0)$, $v_4=(-1,-1)$, $v_5=(1,0)$, $v_6=(1,1)$, $v_7=(-1,0)$, $v_8=(-1,-1)$.
Next, for $i=1,2$, let $M_i$ be a $\mathbb{T}^2$-manifold with 4 fixed points constructed from Theorem \ref{t31} by taking $v_1=(1,0)$, $v_2=(1,1)$, $v_3=(-1,0)$, $v_4=(-1,-1)$.
By using Lemma \ref{l27}, take the equivariant connected sum of $M_1$ and $M_2$ along free orbits to construct a $\mathbb{T}^2$-manifold $M'$. From the constructions, $M$ and $M'$ are not diffeomorphic; $M$ has one chain of eight 2-spheres connecting 8 fixed points, while $M'$ has two chains of four 2-spheres, each chain connecting 4 fixed points. Both of $M$ and $M'$ have 2 fixed points having weights $\{1,1\}$, 4 fixed points having weights $\{-1,1\}$, and 2 fixed points having weights $\{-1,-1\}$, and hence they have the same fixed point data. Therefore, Figure \ref{fig1} describes both of $M$ and $M'$. On the other hand, a disjoint union of two copies of Figure \ref{fig6} also describes both of $M$ and $M'$. However, from the constructions, it is natural to take Figure \ref{fig1} to describe $M$, and to take a disjoint union $\Gamma'$ of two copies of Figure \ref{fig6} to describe $M'$.
\end{rem}

\section{Extendability of a graph} \label{extends}

In this section, we introduce the concept of extendability of a 2-regular labeled directed multigraph and prove several properties in terms of this extendability. We explain the content of this section. Definition \ref{extend} introduces the concept of extendability of a 2-regular labeled directed multigraph to $\mathbb{Z}^2$. Proposition \ref{p37} states that if such a multigraph  extends to $\mathbb{Z}^2$, then the multigraph describes some 4-dimensional almost complex $\mathbb{T}^2$-manifold for the action of the first $S^1$-factor. From Lemma \ref{graphequal} we know that if the circle group acts effectively on a 4-dimensional compact almost complex manifold $M$ with a discrete fixed point set, then there is a multigraph $\Gamma$ describing $M$ that is admissible and effective and satisfies the equal modulo property. Theorem \ref{Textend} asserts that the converse holds. Moreover, Theorem \ref{Textend} shows that the multigraph $\Gamma$ is obtained from a minimal multigraph (an admissible semi-free multigraph) followed by the 4 operations in Theorem \ref{t11}. The proof of Theorem \ref{Textend} uses several lemmas; Lemma \ref{l32} shows that a minimal multigraph extends to $\mathbb{Z}^2$, and Lemmas \ref{l33}, \ref{l34}, \ref{l35}, and \ref{l36} show that if $\Gamma$ extends to $\mathbb{Z}^2$ and $\Gamma'$ is obtained from $\Gamma$ by applying Operation 1, 2, 3, and 4 of Theorem \ref{t11}, respectively, then $\Gamma'$ also extends to $\mathbb{Z}^2$. With these, we will prove Theorems \ref{t01}, \ref{t314}, and \ref{t11} in Section \ref{s7}.

\begin{Definition} \label{extend}
We say that a connected 2-regular labeled directed multigraph $\Gamma$ \textbf{extends to} $\mathbb{Z}^2$ if we can label successive vertices of $\Gamma$ by $p_1,\cdots,p_k$ and choose a sequence of vectors $v_0,v_1,\cdots,v_k,v_{k+1}$ in $\mathbb{Z}^2$ in counterclockwise order with $v_k=v_0$ and $v_{k+1}=v_1$, so that the following hold.
\begin{enumerate}[(1)]
\item Each successive pair $v_i$ and $v_{i+1}$ is a basis of $\mathbb{Z}^2$, for $1 \leq i \leq k$.
\item For $1 \leq i \leq k$, there exists an integer $a_i$ such that $v_{i+1}=-a_i v_i-v_{i-1}$.
\item For $1 \leq i \leq k$, $(p_i,p_{i+1})$ is $v_{i,1}$-edge, where $v_i=(v_{i,1},v_{i,2}) \in \mathbb{Z}^2$. Here, $p_{k+1}=p_1$.
\end{enumerate}
\end{Definition}

\begin{Definition}
We say that a 2-regular labeled directed multigraph $\Gamma$ \textbf{extends to} $\mathbb{Z}^2$ if each component of $\Gamma$ extends to $\mathbb{Z}^2$.
\end{Definition}

To prove Theorem \ref{t02} later, we will need that if $\Gamma$ with $T(\Gamma)=1$ is connected and extends to $\mathbb{Z}^2$, then a manifold that we construct in Theorem \ref{t31} is K\"ahler. For this, we need the following simple lemma.

\begin{lemma} \label{winding}
Let $\Gamma$ be a connected 2-regular labeled directed multigraph that extends to $\mathbb{Z}^2$. Then the number $T(\Gamma)$ of vertices of index 0 is equal to the winding number of $v_i$, where $v_i$ are as in Definition \ref{extend}.
\end{lemma}

\begin{proof}
The winding number of $v_i$ is equal to the number of $j \in \{1,2,\cdots,k\}$ such that $v_j$ lies in the second or third quadrant and $v_{j+1}$ lies in the fourth or first quadrant, equivalently, $v_{j,1}$ is negative and $v_{j+1,1}$ is positive, where $v_j=(v_{j,1},v_{j,2})$. This is equivalent to saying that $(p_j,p_{j+1})$ is $v_{j,1}$-edge and $(p_{j+1},p_{j+2})$ is $v_{j+1,1}$-edge with $v_{j,1}<0$ and $v_{j+1,1}>0$, that is, $p_{j+1}$ has index 0.
\end{proof}

If $\Gamma$ extends to $\mathbb{Z}^2$, then $\Gamma$ describes some 4-dimensional almost complex $S^1$-manifold.

\begin{pro} \label{p37}
Let $\Gamma$ be a 2-regular labeled directed multigraph. Suppose that $\Gamma$ extends to $\mathbb{Z}^2$. Then there exists a 4-dimensional compact, connected almost complex manifold $M$ equipped with an effective $\mathbb{T}^2$-action having a discrete fixed point set, such that $\Gamma$ describes $M$ for the action of the first $S^1$-factor of $\mathbb{T}^2$.

If in addition $T(\Gamma)=1$, then there exists such a manifold $M$ that admits a K\"ahler structure, preserved by the $\mathbb{T}^2$-action.
\end{pro}

\begin{proof}
Let $\Gamma_1,\cdots,\Gamma_m$ be components of $\Gamma$. Since $\Gamma_j$ extends to $\mathbb{Z}^2$, we can label successive vertices of $\Gamma_j$ by $p_1,\cdots,p_k$ and choose a sequence of vectors $v_0,v_1,\cdots,v_k,v_{k+1}$ in $\mathbb{Z}^2$ in counterclockwise order with $v_k=v_0$ and $v_{k+1}=v_1$, so that the following hold.
\begin{enumerate}[(1)]
\item Each successive pair $v_i$ and $v_{i+1}$ is a basis of $\mathbb{Z}^2$, for $1 \leq i \leq k$.
\item For $1 \leq i \leq k$, there exists an integer $a_i$ such that $v_{i+1}=-a_i v_i-v_{i-1}$.
\item For $1 \leq i \leq k$, $(p_i,p_{i+1})$ is $v_{i,1}$-edge, where $v_i=(v_{i,1},v_{i,2}) \in \mathbb{Z}^2$. Here, $p_{k+1}=p_1$.
\end{enumerate}
By (2) of Theorem \ref{t31}, there exists a 4-dimensional compact connected almost complex manifold $M_j$ equipped with an effective $\mathbb{T}^2$-action such that $\Gamma_j$ describes $M_j$ for the action of the first $S^1$-factor. Let $M$ be an equivariant connected sum along free orbits of $M_1,\cdots,M_m$. By Lemma \ref{l27}, $\Gamma$ describes $M$ for the action of the first $S^1$-factor of $\mathbb{T}^2$. This proves the first claim.

Suppose that $\Gamma$ has exactly one vertex of index 0. By Definition \ref{extend} and Lemma \ref{winding}, $\Gamma$ is connected and the winding number of $v_0,v_1,\cdots,v_{k}$ is one. Thus, the last claim of Theorem \ref{t31} implies the second claim of this proposition.
\end{proof}

A minimal multigraph of Theorem \ref{t11} extends to $\mathbb{Z}^2$.

\begin{lem} \label{l32}
Any 2-regular admissible semi-free multigraph extends to $\mathbb{Z}^2$.
\end{lem}

\begin{proof}
Consider a 2-regular admissible semi-free multigraph. Let $\Gamma$ be a component. Since $\Gamma$ is admissible and semi-free, $\Gamma$ has $4k$ vertices; $k$ vertices of index 0, $2k$ vertices of index 1, and $k$ vertices of index 2. Let $p_1$ be a vertex of $\Gamma$ of index 0 and label successive vertices by $p_2, \cdots, p_{4k}$, see Figure \ref{fig9'}. Then $\Gamma$ extends to $\mathbb{Z}^2$ if we take the following $v_i$; $v_{4j+1}=(1,0)$, $v_{4j+2}=(1,1)$, $v_{4j+3}=(-1,0)$, $v_{4j+4}=(-1,-1)$ for each $0 \leq j \leq k-1$, and $a_i=0$ for all $1 \leq i \leq 4k$. \end{proof}

\begin{figure}
\centering
\begin{subfigure}[b][4.7cm][s]{1\textwidth}
\centering
\vfill
\begin{tikzpicture}[state/.style ={circle, draw}]
\node[state] (A) {$p_{4j+1}$};
\node[state] (B) [above right=of A] {$p_{4j+2}$};
\node[state] (C) [above right=of B] {$p_{4j+3}$};
\node[state] (D) [below right=of C] {$p_{4j+4}$};
\node[state] (E) [below right=of D] {$p_{4j+5}$};
\node[state] (F) [above right=of E] {$p_{4j+6}$};
\path (A) [->] edge node[left] {$1, (1,0)$} (B);
\path (B) [->] edge node [left] {$1$, $v_{4j+2}=(1,1)$} (C);
\path (D) [->] edge node [right] {$1$, $v_{4j+3}=(-1,0)$} (C);
\path (E) [->] edge node [left] {$1$, $v_{4j+4}=(-1,-1)$} (D);
\path (E) [->] edge node [right] {$1, (1,0)$} (F);
\end{tikzpicture}
\vspace{\baselineskip}
\end{subfigure}\qquad
\caption{Construction in Lemma \ref{l32}; label of each edge and value of $v_i$ at each $p_i$}\label{fig9'}
\end{figure}

The following lemma corresponds to Operation 1 (Figure \ref{fig2}) in Theorem \ref{t11}.

\begin{lem} \label{l33}
Let $\Gamma$ be a connected 2-regular labeled directed multigraph. Suppose that in $\Gamma$, $(p_{i-1},p_i)$ is $a$-edge and $(p_i,p_{i+1})$ is $b$-edge for some vertices $p_{i-1}$, $p_i$, $p_{i+1}$ and for some positive integers $a$ and $b$ (Figure \ref{fig2-1}). Let $\Gamma'$ be a multigraph obtained from $\Gamma$ by replacing the vertex $p_i$ with $(a+b)$-edge $(p_i',p_i'')$ so that $(p_{i-1},p_i')$ is $a$-edge and $(p_i'',p_{i+1})$ is $b$-edge (Figure \ref{fig2-2}). If $\Gamma$ extends to $\mathbb{Z}^2$, then $\Gamma'$ extends to $\mathbb{Z}^2$.
\end{lem}

\begin{proof}
Since $\Gamma$ extends to $\mathbb{Z}^2$, we can label successive vertices of $\Gamma$ by $p_1,\cdots,p_k$ and choose a sequence of vectors $v_0,v_1,\cdots,v_k,v_{k+1}$ in $\mathbb{Z}^2$ in counterclockwise order with $v_k=v_0$ and $v_{k+1}=v_1$, so that the following hold.
\begin{enumerate}
\item Each successive pair $v_i$ and $v_{i+1}$ is a basis of $\mathbb{Z}^2$, for $1 \leq i \leq k$.
\item For $1 \leq i \leq k$, there exists an integer $a_i$ such that $v_{i+1}=-a_i v_i-v_{i-1}$.
\item For $1 \leq i \leq k$, $(p_i,p_{i+1})$ is $v_{i,1}$-edge, where $v_i=(v_{i,1},v_{i,2}) \in \mathbb{Z}^2$. Here, $p_{k+1}=p_1$.
\end{enumerate}
In particular, $v_{i-1,1}=a$ and $v_{i,1}=b$ since $(p_{i-1},p_i)$ is $a$-edge and $(p_i,p_{i+1})$ is $b$-edge.

Define $v_j'$ as follows.
\begin{center}
$v_1'=v_1$, $\cdots$, $v_{i-2}'=v_{i-2}$, $v_{i-1}'=v_{i-1}$, $v_i'=v_{i-1}+v_i$, $v_{i+1}'=v_i$, $\cdots$, $v_{k+1}'=v_{k}$, $v_{k+2}'=v_1'$.
\end{center}

Since $v_j$ satisfy Condition (1) of Definition \ref{extend}, so do $v_j'$. The $v_j'$ satisfy Condition (2) of Definition \ref{extend} with the following values of $a_j'$.
\begin{center}
$a_1'=a_1$, $\cdots$, $a_{i-2}'=a_{i-2}$, $a_{i-1}'=a_{i-1}-1$, $a_i'=-1$, $a_{i+1}'=a_i-1$, $a_{i+2}'=a_{i+1}$, $\cdots$, $a_{k+1}'=a_k$.
\end{center}
For instance, since $v_{i-1}'=v_{i-1}$, $v_{i-2}'=v_{i-2}$, and $a_{i-1}'=a_{i-1}-1$, it follows that $v_i'=v_{i-1}+v_i=v_{i-1}-a_{i-1}v_{i-1}-v_{i-2}=-(a_{i-1}-1) v_{i-1}-v_{i-2}=-a_{i-1}' v_{i-1}'-v_{i-2}'$.
The $v_j'$ also satisfy Condition (3) of Definition \ref{extend}; for instance, $(p_i', p_{i+1}')$ is $(a+b)$-edge in $\Gamma'$ and $v_{i,1}'=v_{i-1,1}+v_{i,1}=a+b$ because $v_{i-1,1}=a$ and $v_{i,1}=b$. Therefore, $\Gamma'$ extends to $\mathbb{Z}^2$. \end{proof}

The following lemma corresponds to Operation 2 (Figure \ref{fig3}) in Theorem \ref{t11}. 

\begin{lem} \label{l34}
Let $\Gamma$ be a connected 2-regular labeled directed multigraph. Suppose that $(p_1,p_2)$ is $(-c)$-edge, $(p_2,p_3)$ is $(-d)$-edge, and $(p_3,p_4)$ is $c$-edge, for some vertices $p_1$, $p_2$, $p_3$, $p_4$ and for some positive integers $c$, $d$ (Figure \ref{fig3-1}). Let $\Gamma'$ be a multigraph obtained from $\Gamma$  by replacing the label $d$ of the edge $(p_3,p_2)$ with $d+c$ (Figure \ref{fig3-2}). If $\Gamma$ extends to $\mathbb{Z}^2$, then $\Gamma'$ extends to $\mathbb{Z}^2$.
\end{lem}

\begin{proof}
Since $\Gamma$ extends to $\mathbb{Z}^2$, we can label successive vertices of $\Gamma$ by $p_1,\cdots,p_k$ and choose a sequence of vectors $v_0,v_1,\cdots,v_k,v_{k+1}$ in $\mathbb{Z}^2$ in counterclockwise order with $v_k=v_0$ and $v_{k+1}=v_1$, so that the following hold.
\begin{enumerate}
\item Each successive pair $v_i$ and $v_{i+1}$ is a basis of $\mathbb{Z}^2$, for $1 \leq i \leq k$.
\item For $1 \leq i \leq k$, there exists an integer $a_i$ such that $v_{i+1}=-a_i v_i-v_{i-1}$.
\item For $1 \leq i \leq k$, $(p_i,p_{i+1})$ is $v_{i,1}$-edge, where $v_i=(v_{i,1},v_{i,2}) \in \mathbb{Z}^2$. Here, $p_{k+1}=p_1$.
\end{enumerate}
In particular, $v_{1,1}=-c$, $v_{2,1}=-d$, and $v_{3,1}=c$ since $(p_1,p_2)$ is $(-c)$-edge, $(p_2,p_3)$ is $(-d)$-edge, and $(p_3,p_4)$ is $d$-edge. Because $v_3=-a_2 v_2-v_1$, it follows that $-c=v_{3,1}=-a_2 v_{2,1}-v_{1,1}=-a_2 (-d) -c$ and thus $a_2=0$ and $v_3=-v_1$.

Define $v_i'$ as follows.
\begin{center}
$v_1'=v_1$, $v_2'=v_1+v_2$, $v_3'=v_3$, $\cdots$, $v_{k+1}'=v_1'$.
\end{center}

Since $v_i$ satisfy Condition (1) of Definition \ref{extend}, so do $v_i'$; for example, $v_2'$ and $v_3'$ form a basis of $\mathbb{Z}^2$ because $v_2'=v_1+v_2$ and $v_3'=-v_1$, and $v_1$ and $v_2$ form a basis of $\mathbb{Z}^2$. The $v_i'$ satisfy Condition (2) of Definition \ref{extend} with the following values of $a_i'$.
\begin{center}
$a_1'=a_1-1$, $a_2'=0$, $a_3'=a_1+1$, $a_4'=a_4$, $\cdots$, $a_k'=a_k$, $a_{k+1}'=a_1-1$.
\end{center}
The $v_i'$ also satisfy Condition (3) of Definition \ref{extend}; for instance, $(p_2,p_3)$ is $(-c-d)$-edge in $\Gamma'$ and $v_{2,1}'=v_{1,1}+v_{2,1}=-c-d$ because $v_{1,1}=-c$ and $v_{2,1}=-d$. Therefore, $\Gamma'$ extends to $\mathbb{Z}^2$.
\end{proof}

The following lemma corresponds to Operation 3 (Figure \ref{fig4}) in Theorem \ref{t11}. 

\begin{lem} \label{l35} 
Let $\Gamma$ be a connected 2-regular labeled directed multigraph. Suppose that $(p_1,p_2)$ is $e$-edge, $(p_2,p_3)$ is $f$-edge, and $(p_3,p_4)$ is $(-e)$-edge, for some vertices $p_1$, $p_2$, $p_3$, $p_4$ and for some positive integers $e$, $f$ (Figure \ref{fig4-1}). Let $\Gamma'$ be a multigraph obtained from $\Gamma$ by replacing the label $f$ of the edge $(p_2,p_3)$ with $f+e$ (Figure \ref{fig4-2}). If $\Gamma$ extends to $\mathbb{Z}^2$, then $\Gamma'$ extends to $\mathbb{Z}^2$. \end{lem}

\begin{proof}
Exactly the same proof of Lemma \ref{l34} applies if we replace $c$ with $-e$ and $d$ with $-f$.
\end{proof}

The below lemma corresponds to Operation 4 (Figure \ref{fig5}) in Theorem \ref{t11}.

\begin{lem} \label{l36}
Let $\Gamma$ be a connected 2-regular labeled directed multigraph. Suppose that $(p_2,p_1)$ is $g$-edge, $(p_2,p_3)$ is $h$-edge, $(p_3,p_4)$ is $g$-edge, and $(p_5,p_4)$ is $h$-edge, for some vertices $p_1$, $p_2$, $p_3$, $p_4$, $p_5$ and for some positive integers $g$, $h$ (Figure \ref{fig5-1}). Let $\Gamma'$ be a multigraph obtained from $\Gamma$ by replacing $(p_2,p_3)$-edge and $(p_3,p_4)$-edge with $(p_2,p_4)$-edge (which is $(p_2',p_3')$-edge) with label $(g+h)$, removing $p_3$ (Figure \ref{fig5-2}). If $\Gamma$ extends to $\mathbb{Z}^2$, then $\Gamma'$ extends to $\mathbb{Z}^2$.
\end{lem}

\begin{proof}
Since $\Gamma$ extends to $\mathbb{Z}^2$, we can label successive vertices of $\Gamma$ by $p_1,\cdots,p_k$ and choose a sequence of vectors $v_0,v_1,\cdots,v_k,v_{k+1}$ in $\mathbb{Z}^2$ in counterclockwise order with $v_k=v_0$ and $v_{k+1}=v_1$, so that the following hold.
\begin{enumerate}
\item Each successive pair $v_i$ and $v_{i+1}$ is a basis of $\mathbb{Z}^2$, for $1 \leq i \leq k$.
\item For $1 \leq i \leq k$, there exists an integer $a_i$ such that $v_{i+1}=-a_i v_i-v_{i-1}$.
\item For $1 \leq i \leq k$, $(p_i,p_{i+1})$ is $v_{i,1}$-edge, where $v_i=(v_{i,1},v_{i,2}) \in \mathbb{Z}^2$. Here, $p_{k+1}=p_1$.
\end{enumerate}

Since $v_{1,1}=-g$, $v_{3,1}=g$, and $v_{3,1}=-a_2 v_{2,1}-v_{1,1}$, $a_1=0$ and $v_3=-v_1$. Similarly, since $v_{2,1}=h$, $v_{4,1}=-h$, and $v_{4,1}=-a_3 v_{3,1}-v_{2,1}$, $a_3=0$ and $v_4=-v_2$.

Define $v_i'$ as follows.
\begin{center}
$v_1'=v_1$, $v_2'=v_2+v_3$, $v_3'=v_4$, $\cdots$, $v_{k-1}'=v_k$, $v_k'=v_1'$.
\end{center}
Since $v_3=-v_1$, $v_4=-v_2$ and $v_1,v_2,v_3,v_4$ are in counterclockwise order, it follows that $v_1'=v_1$, $v_2'=v_2+v_3=v_2-v_1$, $v_3'=v_4=-v_2$ are also in counterclockwise order and $v_j'$ satisfy Condition (1) of Definition \ref{extend}. The $v_i'$ satisfy Condition (2) of Definition \ref{extend} with the following values of $a_i'$.
\begin{center}
$a_1'=a_1+1$, $a_2'=1$, $a_3'=a_4+1$, $a_4'=a_5$, $\cdots$, $a_{k-1}'=a_k$, $a_{k}'=a_1+1$.
\end{center}
The $v_i'$ also satisfy Condition (3) of Definition \ref{extend}; for instance, $(p_2',p_3')$ is $(h+g)$-edge in $\Gamma'$, and $v_{2,1}'=v_{2,1}+v_{3,1}=h+g$ since $v_{2,1}=h$ and $v_{3,1}=g$. Therefore, $\Gamma'$ extends to $\mathbb{Z}^2$. \end{proof}

\begin{rem} Note that for Lemma \ref{l33}, Lemma \ref{l34}, Lemma \ref{l35}, and Lemma \ref{l36}, the converse also holds. That is, if $\Gamma'$ extends to $\mathbb{Z}^2$, then so does $\Gamma$. \end{rem}

We can summarize the above lemmas as follows.

\begin{lemma} \label{extendcombine}
 Let $\Gamma$ be a multigraph obtained from a 2-regular admissible semi-free multigraph followed by a combination of the 4 operations in Theorem \ref{t11}. Then $\Gamma$ extends to $\mathbb{Z}^2$. 
\end{lemma}

\begin{proof}
By Lemma \ref{l32}, a 2-regular admissible semi-free multigraph extends to $\mathbb{Z}^2$. Therefore, by Lemmas \ref{l33}, \ref{l34}, \ref{l35}, and \ref{l36}, $\Gamma$ extends to $\mathbb{Z}^2$.
\end{proof}

For a 2-regular directed labeled multigraph, we provide a sufficient condition for the multigraph to extend to $\mathbb{Z}^2$ and to be obtained from a minimal multigraph followed by the 4 operations in Theorem \ref{t11}.

\begin{theo} \label{Textend} 
Let $\Gamma$ be a 2-regular admissible effective directed labeled multigraph that satisfies the equal modulo property. Then the following hold.
\begin{enumerate}[(1)]
\item The multigraph $\Gamma$ extends to $\mathbb{Z}^2$.
\item The multigraph $\Gamma$ is obtained from a 2-regular admissible semi-free multigraph followed by a combination of the 4 operations in Theorem \ref{t11}.
\end{enumerate}
\end{theo}

\begin{proof}
The proof is based on the induction on the largest label among the labels of all the edges of $\Gamma$. Let $l$ be the largest label among the labels of all the edges of $\Gamma$. Suppose that $l=1$. Then $\Gamma$ is a 2-regular admissible semi-free multigraph, and this theorem follows from Lemma \ref{l32}. 

Therefore, from now on, suppose that $l>1$. Suppose that $(p,q)$ is $l$-edge for some vertices $p$ and $q$. This means that $p$ has the weight $+l$ and $q$ has the weight $-l$. Then exactly one of the following holds for the indices of $p$ and $q$.
\begin{enumerate}
\item[(a)] $n_p=1$ and $n_q=1$.
\item[(b)] $n_p=0$ and $n_q=1$.
\item[(c)] $n_p=1$ and $n_q=2$.
\item[(d)] $n_p=0$ and $n_q=2$.
\end{enumerate}

Suppose that Case (a) holds. Then the weights at $p$ and $q$ are $\{-x_1,l\}$ and $\{-l,x_2\}$ respectively, for some positive integers $x_1$ and $x_2$. Since $\Gamma$ is effective and $l$ is the largest weight, $x_1<l$ and $x_2<l$. By the equal modulo property, the weights at $p$ and the weights at $q$ are equal modulo $l$, and this implies that $-x_1 \equiv x_2 \mod l$, that is, $x_1+x_2=l$. This is precisely the case as in Figure \ref{fig2-2} with $p_i'=p$, $p_i''=q$, $a=x_1$, and $b=x_2$.

Let $\Gamma'$ be a multigraph obtained from $\Gamma$ by shrinking the $l$-edge $(p,q)$ to a vertex (Figure \ref{fig2-1} with $a=x_1$ and $b=x_2$). Assume that $\Gamma'$ extends to $\mathbb{Z}^2$. Then by Lemma \ref{l33}, $\Gamma$ extends to $\mathbb{Z}^2$. Therefore, in Case (a), extendability of $\Gamma$ reduces to extendability of $\Gamma'$.
For (2), this step corresponds to Operation 1 in Theorem \ref{t11}.

Suppose that Case (b) holds. Then the weights at $p$ and $q$ are $\{x_1,l\}$ and $\{-l,x_2\}$ respectively, for some positive integers $x_1$ and $x_2$. Since $\Gamma$ is effective and $l$ is the largest weight, $x_1<l$ and $x_2<l$. By the equal modulo property, the weights at $p$ and the weights at $q$ are equal modulo $l$. This implies that $x_1 \equiv x_2 \mod l$, that is, $x_1=x_2$. This is precisely the case as in Figure \ref{fig3-2} with $p_{i+1}=p$, $p_i=q$, $c+d=l$, and $c=x_1$.

Let $\Gamma'$ be a multigraph obtained from $\Gamma$ by changing the label $l$ of the edge $(p,q)$ by $l-x_1$  (Figure \ref{fig3-1} with $p_{i+1}=p$, $p_i=q$, $c=x_1$, and $d=l-x_1$). Assume that $\Gamma'$ extends to $\mathbb{Z}^2$. Then by Lemma \ref{l34}, $\Gamma$ extends to $\mathbb{Z}^2$. Therefore, in Case (b), extendability of $\Gamma$ reduces to extendability of $\Gamma'$. For (2), this step corresponds to Operation 2 in Theorem \ref{t11}.

Suppose that Case (c) holds. Then the weights at $p$ and $q$ are $\{-x_1,l\}$ and $\{-l,-x_2\}$ respectively, for some positive integers $x_1$ and $x_2$. Since $\Gamma$ is effective and $l$ is the largest weight, $x_1<l$ and $x_2<l$. By the equal modulo property, the weights at $p$ and the weights at $q$ are equal modulo $l$, and this implies that $-x_1 \equiv -x_2 \mod l$, that is, $x_1=x_2$. This is precisely the case as in Figure \ref{fig4-2} with $p_i=p$, $p_{i+1}=q$, $e=x_1$, and $f+e=l$.

Let $\Gamma'$ (Figure \ref{fig4-1} with $p_i=p$, $p_{i+1}=q$, $e=x_1$, and $f=l-x_1$) be a multigraph obtained from $\Gamma$ by changing the label $l$ of the edge $(p,q)$ by $l-x_1$. Assume that $\Gamma'$ extends to $\mathbb{Z}^2$. Then by Lemma \ref{l35}, $\Gamma$ extends to $\mathbb{Z}^2$. Therefore, in Case (c), extendability of $\Gamma$ reduces to extendability of $\Gamma'$. For (2), this step corresponds to Operation 3 in Theorem \ref{t11}.

Suppose that Case (d) holds. Then the weights at $p$ and $q$ are $\{x_1,l\}$ and $\{-l,-x_2\}$ respectively, for some positive integers $x_1$ and $x_2$. Since $\Gamma$ is effective and $l$ is the largest weight, $x_1<l$ and $x_2<l$. By the equal modulo property, the weights at $p$ and the weights at $q$ are equal modulo $l$, and this implies that $x_1 \equiv -x_2 \mod l$, that is, $x_1+x_2=l$. This is precisely the case as in Figure \ref{fig5-2}. Let $p'$ be the terminal point of the $x_1$-edge whose initial point is $p$, and let $q'$ be the initial point of the $x_2$-edge whose terminal point is $q$, that is, $(p,p')$ is $x_1$-edge and $(q',q)$ is $x_2$-edge. In other words, $p',p,q,q'$ correspond to $p_{i-2},p_{i-1},p_{i+1},p_{i+2}$ in Figure \ref{fig5-2} with $g=x_1$ and $h=x_2$. 

Let $\Gamma'$ be a multigraph obtained from $\Gamma$ by replacing the edge $(p,q)$ with two edges $(p,r)$ and $(r,q)$ with labels $x_2$ and $x_1$ respectively with adding a vertex $r$ (Figure \ref{fig5-1} with $p_{i-1}=p$, $p_{i+1}=q$, $p_i=r$, $g=x_1$, and $h=x_2$). Assume that $\Gamma'$ extends to $\mathbb{Z}^2$. Then by Lemma \ref{l36}, $\Gamma$ extends to $\mathbb{Z}^2$. Therefore, in Case (d), extendability of $\Gamma$ reduces to extendability of $\Gamma'$. For (2), this step corresponds to Operation 4 in Theorem \ref{t11}.

Whenever $(p,q)$ is $l$-edge, by the four steps above, extendability of $\Gamma$ reduces to extendability of $\Gamma'$ that has one less $l$-edge than $\Gamma$. The multigraph $\Gamma'$ is admissible and effective, and satisfies the equal modulo property. By repeating the arguments, extendability of $\Gamma$ reduces to extendability of a 2-regular labeled directed multigraph $\Gamma''$ that is admissible and has edges whose labels are all 1. That is, $\Gamma''$ is a 2-regular admissible semi-free multigraph. By Lemma \ref{l32}, $\Gamma''$ extends to $\mathbb{Z}^2$. Therefore, both of (1) and (2) of this theorem follow. \end{proof}

Proposition \ref{p37} and Theorem \ref{Textend} together imply the following.

\begin{theorem}
Let $\Gamma$ be a 2-regular admissible effective directed labeled multigraph that satisfies the equal modulo property. Then there exists a 4-dimensional compact, connected almost complex manifold $M$ equipped with an effective $\mathbb{T}^2$-action having a discrete fixed point set, such that $\Gamma$ describes $M$ for the action of the first $S^1$-factor of $\mathbb{T}^2$.
\end{theorem}

We see that the following two conditions are equivalent for a 2-regular directed labeled multigraph.

\begin{lemma}
Let $\Gamma$ be a 2-regular directed labeled multigraph. Then the following are equivalent.
\begin{enumerate}
\item The multigraph $\Gamma$ is admissible and effective and satisfies the equal modulo property.
\item The multigraph $\Gamma$ is obtained from an admissible semi-free multigraph followed by a combination of the 4 operations in Theorem \ref{t11}.
\end{enumerate}
\end{lemma}

\begin{proof}
Suppose that $\Gamma$ is admissible and effective and satisfies the equal modulo property. By Theorem \ref{Textend}, $\Gamma$ is obtained from an admissible semi-free multigraph followed by a combination of the 4 operations in Theorem \ref{t11}.

Any 2-regular admissible semi-free multigraph is effective and satisfies the equal modulo property. Let $\Gamma'$ be a multigraph obtained from a 2-regular admissible semi-free multigraph followed by a combination of the 4 operations in Theorem \ref{t11}. Suppose that $\Gamma'$ is admissible and effective and satisfies the equal modulo property. Let $\Gamma$ be a multigraph obtained from $\Gamma'$ by applying one of the 4 operations in Theorem \ref{t11}. One can check that $\Gamma$ is also admissible and effective and satisfies the equal modulo property; see Figures \ref{fig2}, \ref{fig3}, \ref{fig4}, and \ref{fig5}.
\end{proof}

\section{Proofs of Theorems \ref{t01}, \ref{t314}, and \ref{t11}} \label{s7}

In this section, with the preliminaries established in the previous sections, we prove Theorems \ref{t01}, \ref{t314}, and \ref{t11}. First, we prove Theorem \ref{t01}, which classifies the weights at the fixed points of a 4-dimensional compact almost complex $S^1$-manifold with a discrete fixed point set. To simplify its proof, we introduce some notions.

\begin{Definition}
Let $S=\{p_1,\cdots,p_k\}$ be a set, where each $p_i=\{a_i,b_i\}$ is an unordered pair of integers. We call $p_i$ a \textbf{point} and $a_i$ and $b_i$ the \textbf{weights} of $p_i$.
The \textbf{index} of $p_i$ is the number of negative elements of $a_i$ and $b_i$.
\end{Definition}

\begin{Definition} \label{generated}
Let $S=\{p_1,\cdots,p_k\}$ be a set of unordered pairs $p_i=\{a_i,b_i\}$ of relatively prime integers with the following properties.
\begin{enumerate}[(1)]
\item The number of weights that are $+1$ at points of index $j$ is equal to the number of weights that are $-1$ at points of index $j+1$ for $j \in \{0,1\}$.
\item Given an integer $w > 1$ and a relatively prime integer $x$, the following sets have the same cardinality.
\begin{enumerate}[(a)]
\item Points with one weight $+w$ and the other equal to $x$ modulo $w$.
\item Points with one weight $-w$ and the other equal to $x$ modulo $w$.
\end{enumerate}
\end{enumerate}
A \textbf{multigraph generated by $\textbf{S}$} is a 2-regular directed labeled multigraph, constructed as follows.
\begin{enumerate}[(i)]
\item To each $p_i$ we assign a vertex, also denoted $p_i$.
\item From (1), each time some point $p_i$ has the weight $+1$ and index $m$, there is a point $p_j$ that has the weight $-1$ and index $m+1$; then we draw an edge from $p_i$ to $p_j$ with label $1$. This exhausts all weights $+1$ and $-1$ that occur over all points of $S$.
\item Let $w>1$ be an integer. By (2), each time some point $p_i$ has the weight $w$, there is another point $p_j$ that has the weight $-w$, such that $x \equiv y \mod w$, where $x$ ($y$) is the other weight of $p_i$ ($p_j$) and is relatively prime to $w$; we then draw an edge from $p_i$ to $p_j$ with label $w$. Doing this for each positive integer $w>1$, we exhaust all weights over all points, to draw edges.
\end{enumerate}
\end{Definition}

With the above, we prove Theorem \ref{t01}.

\begin{proof}[\textbf{Proof of Theorem \ref{t01}}]
Let the circle act effectively on a 4-dimensional compact connected almost complex manifold $M$ with a discrete fixed point set. By Lemma \ref{graphequal}, there exists a labeled directed multigraph $\Gamma$ describing $M$ that is admissible and effective and satisfies the equal modulo property. Since $\Gamma$ is admissible, (1) of Theorem \ref{t01} holds. Since $\Gamma$ is effective and satisfies the equal modulo property, (2) of Theorem \ref{t01} holds. This proves one direction of Theorem \ref{t01}.

To prove the converse, let $S=\{p_1,\cdots,p_k\}$ be a set, where each $p_i=\{a_i,b_i\}$ is an unordered pair of relatively prime integers. Suppose that the following hold.
\begin{enumerate}[(1)]
\item The number of weights that are $+1$ at points of index $j$ is equal to the number of weights that are $-1$ at points of index $j+1$ for $j \in \{0,1\}$.
\item Given an integer $w > 1$ and a relatively prime integer $x$, the following sets have the same cardinality.
\begin{enumerate}[(a)]
\item Points with one weight $+w$ and the other equal to $x$ modulo $w$.
\item Points with one weight $-w$ and the other equal to $x$ modulo $w$.
\end{enumerate}
\end{enumerate}

Let $\Gamma$ be a multigraph generated by $S$. By construction, $\Gamma$ is 2-regular, directed, and labeled. By (ii) of Definition \ref{generated}, $\Gamma$ is admissible. By (iii) of Definition \ref{generated}, $\Gamma$ is effective and satisfies the equal modulo property. Therefore, by Theorem \ref{Textend}, $\Gamma$ extends to $\mathbb{Z}^2$. By Proposition \ref{p37}, there exists a 4-dimensional compact, connected almost complex manifold $M$ equipped with a $\mathbb{T}^2$-action having a discrete fixed point set, such that $\Gamma$ describes $M$ for the action of the first $S^1$-factor of $\mathbb{T}^2$. The last claim of this theorem follows from Lemma \ref{2sphere}.
\end{proof}

Next, we prove Theorem \ref{t314}; any set of pairs of integers that arises as the weights at the fixed points of a circle action on a 4-dimensional compact almost complex manifold with a discrete fixed point set, also arises as the weights of the restriction of a $\mathbb{T}^2$-action.

\begin{proof}[\textbf{Proof of Theorem \ref{t314}}]
By Lemma \ref{graphequal}, there exists a labeled directed multigraph $\Gamma$ describing $M$ that is admissible and effective and satisfies the equal modulo property. By Theorem \ref{Textend}, $\Gamma$ extends to $\mathbb{Z}^2$. By Proposition \ref{p37}, there exists a 4-dimensional compact, connected almost complex manifold $M'$ equipped with a $\mathbb{T}^2$-action having a discrete fixed point set, such that $\Gamma$ describes $M'$ for the action of the first $S^1$-factor of $\mathbb{T}^2$.
\end{proof}

We prove Theorem \ref{t11}, which provides a minimal multigraph and operations for a multigraph containing the fixed point data of a 4-dimensional compact almost complex $S^1$-manifold with a discrete fixed point set

\begin{proof}[\textbf{Proof of Theorem \ref{t11}}]
Let the circle act effectively on a 4-dimensional compact almost complex manifold $M$ with a discrete fixed point set. By Lemma \ref{graphequal}, there exists a labeled directed multigraph $\Gamma$ describing $M$ that is admissible and effective and satisfies the equal modulo property. By Theorem \ref{Textend}, $\Gamma$ is obtained from a 2-regular admissible semi-free multigraph followed by a combination of the 4 operations in Theorem \ref{t11}.

By Theorem \ref{t21}, the Todd genus of $M$ is equal to the number of fixed points of index 0, which is equal to the number $T(\Gamma)$ of vertices of $\Gamma$ of index 0.

We prove the last claim of Theorem \ref{t11}. Let $\Gamma$ be a multigraph obtained from a 2-regular admissible semi-free multigraph followed by a combination of the 4 operations in Theorem \ref{t11}. 
By Lemma \ref{extendcombine}, $\Gamma$ extends to $\mathbb{Z}^2$. Then the last claim follows from Proposition \ref{p37}.
\end{proof}

\section{Chern numbers} \label{s5}

In this section, we prove Theorem \ref{c115} and Theorem \ref{c19}. Theorem \ref{c115} provides a necessary and sufficient condition for the Chern numbers of a 4-dimensional compact almost complex $S^1$-manifold with a discrete fixed point set, and Theorem \ref{c19} provides a necessary and sufficient condition for the number $N_i$ of fixed points of index $i$ for such a manifold.

\begin{proof}[\textbf{Proof of Theorem \ref{c19}}]
Let the circle act on a 4-dimensional compact connected almost complex manifold $M$ with exactly $N_0$ fixed points of index $0$, $N_1$ fixed points of index $1$, and $N_2$ fixed points of index $2$, where not all $N_0$, $N_1$, $N_2$ are zero. By Theorem \ref{t21}, $N_0=N_2$ and the Todd genus of $M$ is equal to $N_0$. Corollary \ref{c23} then implies that $N_0$, $N_1$, $N_2$ are all positive. This proves one direction.

Fix positive integers $N_0$ and $N_1$. Let $k=2N_0+1$. Let $S_1$ be a set 
\begin{center}
$S_1=\{\{k,1\},\{-1,2\},\{-2,-3\},\{3,4\},\{-4,-5\},\{5,6\},\{-6,-7\},\{7,8\},\cdots,\{k-2,k-1\},\{-k-1,-k\}\}$. 
\end{center}
For $1 \leq i \leq N_1-1$, define inductively a set $S_{i+1}$ from $S_i$ as follows: the set $S_i$ has at least one point $p$ with index 1. Let $-a$ and $b$ be the weights at $p$ for some positive integers $a$ and $b$. Then replace $p=\{-a,b\}$ with two points $p_1=\{-a,a+b\}$ and $p_2=\{-a-b,b\}$. One can check that for each $1 \leq i \leq N_1-1$, since $S_i$ satisfies Conditions (1) and (2) of Theorem \ref{t01}, so does $S_{i+1}$; moreover, $S_{i+1}$ has one more point of index 1 than $S_i$ and hence $S_{i+1}$ has $N_0$ points of index 0, $i+1$ points of index 1, and $N_0$ points of index 2. Applying Theorem \ref{t01} to the set $S_{N_1}$, there exists an effective circle action on a 4-dimensional compact, connected almost complex manifold $M$ with $2N_0+N_1$ fixed points, $N_0$ points of index 0, $N_1$ points of index 1, and $N_0$ points of index 2.
\end{proof}

\begin{proof}[\textbf{Proof of Theorem \ref{c115}}] 
This theorem follows from Theorem \ref{c19} and the fact that $c_1^2[M]=10 N_0- N_1$ and $c_2[M]=2N_0+N_1$, where $N_i$ is the number of fixed points of index $i$; see \cite[$\S$6.1]{S}. \end{proof} 

\section{Comparison with complex manifolds and symplectic manifolds} \label{s4}

In this section, (1) we compare circle actions with discrete fixed point sets on 4-dimensional almost complex, complex, and symplectic manifolds, (2) prove Theorem \ref{t02}, (3) establish a version of Theorem \ref{t11} for a complex or symplectic manifold (Theorems \ref{t12} and \ref{t83}), and (4) compare the known classifications for 4-dimensional complex or symplectic $S^1$-manifolds and our classification for these manifolds (Theorem \ref{t12}) that is in terms of our minimal model (multigraph) and operations.

We compare circle actions on almost complex manifolds, complex manifolds, and symplectic manifolds in low dimensions. In dimension 2, there is no difference; if a 2-dimensional compact connected oriented manifold $M$ admits a circle action, either $M$ is the 2-sphere $S^2$ or the 2-torus $\mathbb{T}^2$, and both of them are K\"ahler; hence both of them are almost complex, complex, and symplectic.

In dimension 4, however, the class of almost complex $S^1$-manifolds is bigger than that of complex $S^1$-manifolds and that of symplectic $S^1$-manifolds. If the circle acts on a 4-dimensional compact connected almost complex manifold $M$ with a non-empty discrete fixed point set, the Todd genus of $M$ takes any positive integer. On the other hand, if in addition $M$ is complex or symplectic and the circle action preserves the given structure, the Todd genus of $M$ is 1. Therefore, there are almost complex manifolds equipped with circle actions which cannot be complex or symplectic.

After proving the following lemma, we prove Theorem \ref{t02}.

\begin{lem} \label{todd}
Let $M$ be a 4-dimensional compact connected manifold, complex or symplectic. Let the circle act on $M$ preserving the complex or symplectic structure. Suppose that the fixed point set is non-empty and discrete. Then the Todd genus of $M$ is 1.
\end{lem}

\begin{proof}
If $M$ is a complex manifold, the Todd genus of $M$ is equal to 1; see \cite{CHK}. Suppose that $M$ is a symplectic manifold. Since there is a fixed point and $\dim M=4$, the action is Hamiltonian; see \cite{MD}. Since the action is Hamiltonian, there is a unique fixed point of index 0. By Theorem \ref{t21}, the Todd genus of $M$ is equal to the number of fixed points of index 0. \end{proof}

Recall that Theorem \ref{t02} provides a necessary and sufficient condition for the weights at the fixed points of an $S^1$-action on a 4-dimensional compact connected manifold with a discrete fixed point set, which is complex or symplectic.

\begin{proof}[\textbf{Proof of Theorem \ref{t02}}]
By Lemma \ref{todd}, the Todd genus of $M$ is 1. Theorem \ref{t21} implies that $M$ has exactly one fixed point of index 0. Then one direction of Theorem \ref{t02} follows from Theorem \ref{t01}.

To prove the converse, for $1 \leq i \leq k$, let $p_i=\{a_i,b_i\}$ be an unordered pair of relatively prime non-zero integers with the following properties.
\begin{enumerate}[(1)]
\item The number of weights that are $+1$ at fixed points of index $j$ is equal to the number of weights that are $-1$ at fixed points of index $j+1$ for $j \in \{0,1\}$.
\item Given an integer $w > 1$ and a relatively prime integer $x$, the following sets have the same cardinality.
\begin{enumerate}[(a)]
\item Points $p_i$ with one weight $+w$ and the other equal to $x$ modulo $w$.
\item Points $p_i$ with one weight $-w$ and the other equal to $x$ modulo $w$.
\end{enumerate}
\item There is exactly one point whose weights are all positive.
\end{enumerate}

Let $S=\{p_1,\cdots,p_k\}$. Let $\Gamma$ be a multigraph generated by $S$ (Definition \ref{generated}). Then $\Gamma$ is a 2-regular admissible effective directed labeled multigraph satisfying the equal modulo property. By Theorem \ref{Textend}, $\Gamma$ extends to $\mathbb{Z}^2$. Since there exists exactly one $i$ such that both $a_i$ and $b_i$ are positive, $\Gamma$ has exactly one vertex of index 0. This implies that $\Gamma$ is connected. By Lemma \ref{winding}, the winding number of $v_i$ is 1, where $v_i$ are as in Definition \ref{extend}. Then by Proposition \ref{p37}, the converse holds.
\end{proof}

We show that if $M$ is complex or symplectic, a multigraph describing $M$ can be constructed from a single 2-regular admissible semi-free multigraph $\Gamma$ with $T(\Gamma)=1$ (Figure \ref{fig6}) followed by the 4 operations in Theorem \ref{t11}.

\begin{theo} \label{t12} Let $M$ be a 4-dimensional compact connected manifold, complex or symplectic. Let the circle act on $M$ preserving the complex or symplectic structure. Suppose that the fixed point set is non-empty and discrete. Then a multigraph describing $M$ can be constructed by beginning with a 2-regular admissible semi-free multigraph $\Gamma$ with $T(\Gamma)=1$ (Figure \ref{fig6}) followed by a combination of the 4 operations in Theorem \ref{t11}. 
\end{theo}

\begin{figure}
\centering
\begin{tikzpicture}[state/.style ={circle, draw}]
\node[state] (A) {$p_1$};
\node[state] (B) [above left=of A] {$p_4$};
\node[state] (C) [above right=of A] {$p_2$};
\node[state] (D) [above right=of B] {$p_3$};
\path (A) [->] edge node[right] {$1$} (B);
\path (A) [->] edge node [right] {$1$} (C);
\path (B) [->] edge node [right] {$1$} (D);
\path (C) [->] edge node [right] {$1$} (D);
\end{tikzpicture}
\vfill
\caption{Minimal multigraph for $M$ complex or symplectic}\label{fig6}
\end{figure}

\begin{proof}
We apply Theorem \ref{t11}. By Theorem \ref{t11}, a labeled directed multigraph $\Gamma'$ describing $M$ can be achieved from a 2-regular admissible semi-free multigraph followed by the 4 operations in Theorem \ref{t11}. Since $\textrm{Todd}(M)=1$ by Lemma \ref{todd}, Theorem \ref{t21} implies that $\Gamma'$ has exactly 1 vertex of index 0. Since the 4 operations in Theorem \ref{t11} do not change the number of vertices (fixed points) of index 0, to achieve $\Gamma'$ from an admissible semi-free multigraph by the 4 operations in Theorem \ref{t11}, we must begin with a single 2-regular admissible semi-free multigraph $\Gamma$ with $T(\Gamma)=1$ (see Figure \ref{fig6}), that is, $\Gamma$ has precisely one vertex of index 0. \end{proof}

Moreover, the converse of Theorem \ref{t12} holds.

\begin{theo} \label{t83}
Let $\Gamma$ be a multigraph obtained from a 2-regular admissible semi-free multigraph $\Gamma'$ with $T(\Gamma')=1$ (Figure \ref{fig6}) followed by a combination of the 4 operations in Theorem \ref{t11}. Then there exists a 4-dimensional compact connected K\"ahler manifold equipped with a $\mathbb{T}^2$-action such that $\Gamma$ describes $M$ for the action of the first $S^1$-factor. 
\end{theo}

\begin{proof}
By Lemma \ref{extendcombine}, $\Gamma$ extends to $\mathbb{Z}^2$. Since the 4 operations in Theorem \ref{t11} do not change the number of vertices of index 0, the number $T(\Gamma)$ of vertices of index 0 is 1. Then this theorem follows from Proposition \ref{p37}.
\end{proof}

Let the circle act on a 4-dimensional compact connected manifold, complex or symplectic, preserving the complex or symplectic structure. Assume that the fixed point set is non-empty and discrete. Carrell, Howard, and Kosniowski for a complex manifold \cite{CHK} and Karshon \cite{Ka} for a symplectic manifold proved that such a manifold can be obtained from $\mathbb{CP}^2$ or a Hirzebruch surface followed by blow ups, where any blow up only occurs at a fixed point of index 1. Let $p$ be a fixed point of index 1. Let $\{-a,b\}$ be the weights at $p$ for some positive integers $a$ and $b$. Blowing up $p$ equivariantly (either in complex or symplectic sense), we get two fixed points $p'$ and $p''$ that have weights $\{-a,a+b\}$ and $\{-a-b,b\}$ respectively, instead of $p$. For a multigraph describing a manifold, blowing up at a fixed point of index 1 corresponds to Operation 1 of Theorem \ref{t11}; see Figure \ref{fig2}. To compare our minimal model (Figure \ref{fig6}) and their minimal models (Figures \ref{fig7-1}, \ref{fig7-2}, \ref{fig7-3}), we first describe circle actions on $\mathbb{CP}^2$ and Hirzebruch surfaces. 

\begin{exa} Let $S^1$ act on $\mathbb{CP}^2$ by
\begin{center}
$g \cdot [z_0:z_1:z_2]=[z_0:g^a z_1:g^{a+b} z_2]$
\end{center}
for all $g \in S^1 \subset \mathbb{C}$, for some positive integers $a$ and $b$ that are relatively prime. The action has three fixed points $p_1=[1:0:0]$, $p_2=[0:1:0]$, and $p_3=[0:0:1]$, and the weights at the fixed points are $\{a+b,a\}$, $\{-a,b\}$, and $\{-b,-a-b\}$, respectively. Figure \ref{fig7-1} is the multigraph describing $\mathbb{CP}^2$. \end{exa}

\begin{exa} \label{e210}
Let $n$ be an integer. A Hirzebruch surface is defined as 
\begin{center}
$\{([z_0:z_1:z_2],[w_1:w_2])\in\mathbb{CP}^2\times\mathbb{CP}^1|z_1 w_2^n=z_2 w_1^n\}$. 
\end{center}
Let $c$ and $d$ be positive integers such that $c-nd \neq 0$. Let $S^1$ act on the Hirzebruch surface by
\begin{center}
$g \cdot ([z_0:z_1:z_2],[w_1:w_2]) = ([g^c z_0:z_1:g^{nd} z_2],[w_1:g^d w_2])$
\end{center}
for all $g \in S^1 \subset \mathbb{C}$.
The action has 4 fixed points, $p_1=([1:0:0],[1:0]), p_2=([1:0:0],[0:1]), p_3=([0:1:0],[1:0])$, and $p_4=([0:0:1],[0:1])$, and the weights at the fixed points are $\{-c,d\}$, $\{nd-c,-d\}$, $\{c,d\}$, and $\{c-nd,-d\}$, respectively. If $c-nd$ is positive, Figure \ref{fig7-2} with $e=c-nd$ describes the manifold. Suppose that $c-nd$ is negative. In this case, if $d=1$, Figure \ref{fig7-2} with $e=nd-c$ describes the manifold, and if $d>1$, Figure \ref{fig7-3} with $e=nd-c$ describes the manifold. If $d=1$, Figure \ref{fig7-3} also describes the manifold, but we choose Figure \ref{fig7-2} to describe the manifold so that the multigraph can be achieved from our minimal model. This results from the fact that any multigraph that we associate in this paper satisfies the conditions in Lemma \ref{l26}; since $d=1$ is the smallest positive weight, for any edge $e$ with label 1, a multigraph we associate satisfies $n_{t(e)}=n_{i(e)}+1$ where $n_v$ is the index of a vertex $v$; see Definition \ref{d11}. Note that when Figure \ref{fig7-2} describes the manifold, $c \equiv e \mod d$ and when Figure \ref{fig7-3} describes the manifold, $c \equiv -e \mod d$. \end{exa}

\begin{figure}
\centering
\begin{subfigure}[b][7cm][s]{.25\textwidth}
\centering
\vfill
\begin{tikzpicture}[state/.style ={circle, draw}]
\node[state] (a) {$p_1$};
\node[state] (b) [above right=of a] {$p_2$};
\node[state] (c) [above left=of b] {$p_3$};
\path (a) [->] edge node[right] {$a$} (b);
\path (b) [->] edge node[right] {$b$} (c);
\path (a) [->]edge node [right] {$a+b$} (c);
\end{tikzpicture}
\vfill
\caption{$\mathbb{CP}^2$}\label{fig7-1}
\vspace{\baselineskip}
\end{subfigure}\qquad
\begin{subfigure}[b][7cm][s]{.25\textwidth}
\centering
\vfill
\begin{tikzpicture}[state/.style ={circle, draw}]
\node[state] (A) {$p_1$};
\node[state] (B) [above left=of A] {$p_2$};
\node[state] (C) [above right=of A] {$p_3$};
\node[state] (D) [above right=of B] {$p_4$};
\path (A) [->] edge node[right] {$d$} (B);
\path (A) [->] edge node [right] {$c$} (C);
\path (B) [->] edge node [right] {$e$} (D);
\path (C) [->] edge node [right] {$d$} (D);
\end{tikzpicture}
\vfill
\caption{$c-nd>0$ or $d=1$}\label{fig7-2}
\vspace{\baselineskip}
\end{subfigure}\qquad
\begin{subfigure}[b][7cm][s]{.25\textwidth}
\centering
\vfill
\begin{tikzpicture}[state/.style ={circle, draw}]
\node[state] (A) {$p_1$};
\node[state] (B) [above left=of A] {$p_3$};
\node[state] (C) [above =of B] {$p_4$};
\node[state] (D) [above right=of C] {$p_2$};
\path (A) [->] edge node[right] {$c$} (B);
\path (A) [->] edge node [right] {$d$} (D);
\path (B) [->] edge node [right] {$d$} (C);
\path (C) [->] edge node [left] {$e$} (D);
\end{tikzpicture}
\vfill
\caption{$c-nd<0$ and $d>1$}\label{fig7-3}
\end{subfigure}
\caption{Multigraphs for $\mathbb{CP}^2$ and Hirzebruch surface}\label{fig7}
\end{figure}
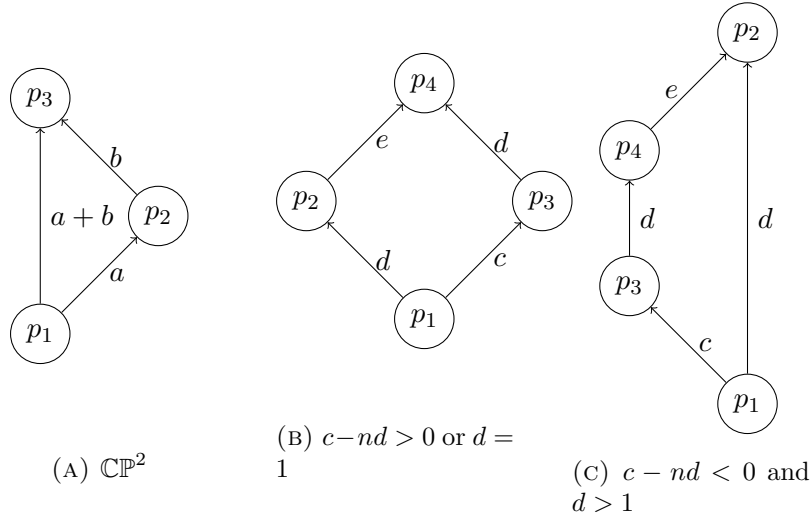

On the other hand, as in Theorem \ref{t11}, our minimal model is described by a semi-free multigraph. Therefore, we shall discuss how to go from our minimal multigraph Figure \ref{fig6} to their minimal multigraphs Figure \ref{fig7-1}, Figure \ref{fig7-2}, and Figure \ref{fig7-3} with $d>1$, by our operations in Theorem \ref{t11}.

For a multigraph Figure \ref{fig7-1} for $\mathbb{CP}^2$, perform the reverse of Operation 4 in Theorem \ref{t11}. Then the resulting multigraph is Figure \ref{fig7-2} if we set $a=c$, $b=d$, and $e=c$. This means that if we can go from our minimal multigraph Figure \ref{fig6} to Figure \ref{fig7-2} with $a=c$, $b=d$, and $e=c$ by our operations, then we can perform Operation 4 in Theorem \ref{t11} to Figure \ref{fig7-2} with $a=c$, $b=d$, and $e=c$ to reach Figure \ref{fig7-1}. Therefore, for a multigraph for $\mathbb{CP}^2$, our task is to show how to go from Figure \ref{fig6} to Figure \ref{fig7-2}.

From now on, we will show how to go from their minimal multigraphs to our minimal multigraph by performing reverses of our operations. Then one can chase the reversed operations and go from our minimal one to their minimal ones by performing our operations. Therefore, our task is to show how to go from Figure \ref{fig7-2} and Figure \ref{fig7-3} with $d>1$ to Figure \ref{fig6}.

We show how to go from Figure \ref{fig7-3} with $d>1$ to Figure \ref{fig6}. In Figure \ref{fig7-3}, by performing the reverse of Operation 2 $j_1$-times, replace the label $c$ of $(p_1,p_3)$-edge by $c-j_1d$ where $j_1$ is the biggest non-negative integer such that $c-j_1d$ is positive and call it $c_1$. Then $c_1<d$. Similarly, by performing the reverse of Operation 3 $j_2$-times, replace the label $e=nd-c$ of $(p_4,p_2)$-edge by $nd-c-j_2d$ where $j_2$ is the biggest non-negative integer such that $nd-c-j_2d$ is positive and call it $c_2$. Then $c_2<d$. Since $c_1=c-j_1d$, $c_2=nd-c-j_2d$, $0<c_1<d$, $0<c_2<d$, and $c_1+c_2$ is a multiple of $d$, we have $c_1+c_2=d$. The resulting multigraph is Figure \ref{fig8-1}. Next, do the reverse of Operation 4 for $(p_1,p_2)$-edge. The resulting multigraph is Figure \ref{fig8-2}. Next, do the reverse of Operation 1 for $(p_3,p_4)$-edge. Then the resulting multigraph is Figure \ref{fig8-3}.

Both of Figure \ref{fig7-2} and Figure \ref{fig8-3} with $d>1$ are now included in Figure \ref{fig9} with $f \equiv h \mod g$ if we relabel the labels suitably, where $f$, $g$, and $h$ are pairwise prime. Therefore, it remains to show how to go from Figure \ref{fig9} to Figure \ref{fig6}.

Now, by performing the reverse of Operation 2 for $(p_1,p_2)$-edge $k_1$-times, replace the label $f$ of $(p_1,p_2)$-edge by $f-k_1g$ where $k_1$ is the biggest non-negative integer such that $f-k_1g$ is positive and call it $f_1$. Similarly, by performing the reverse of Operation 3 for $(p_3,p_4)$-edge $k_2$ times, replace the label $h$ of $(p_3,p_4)$-edge by $h-k_2g$ where $k_2$ is the biggest non-negative integer such that $h-k_2g$ is positive and call it $h_1$. Since $f \equiv h \mod g$, this means that $f_1=h_1$ and $f_1<g$. Next, do the reverse of Operation 3 for $(p_2,p_4)$-edge $k_3$ times to replace the label $g$ of the $(p_2,p_4)$-edge by $g-k_3f_1$ where $k_3$ is the biggest non-negative integer such that $g-k_3f_1$ is positive and call it $g_1$. Do the reverse of Operation 2 for $(p_1,p_3)$-edge $k_4$ times to replace the label $g$ of the $(p_1,p_3)$-edge by $g-k_4f_1$ where $k_4$ is the biggest non-negative integer such that $g-k_4f_1$ is positive and call it $g_2$. Then we have that $g_1=g_2$ and $g_1<f_1$. Then we are still in the case of Figure \ref{fig9}, but now the labels of all edges are reduced. Therefore, if we repeat this process, we can reduce the label of every edge to 1. Hence we can reach our multigraph Figure \ref{fig6}. This means that from Figure \ref{fig6} we can reach Figure \ref{fig7-2} and Figure \ref{fig8-3} (with $d>1$) and hence Figure \ref{fig7-1} by performing our Operations 1, 2, 3 and 4 in Theorem \ref{t11}.

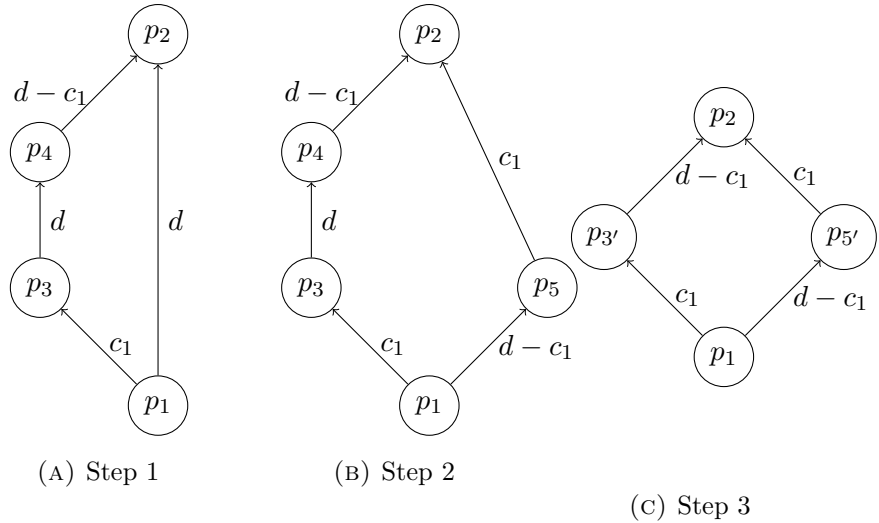
\begin{figure}
\centering
\begin{subfigure}[b][7cm][s]{.25\textwidth}
\centering
\vfill
\begin{tikzpicture}[state/.style ={circle, draw}]
\node[state] (A) {$p_1$};
\node[state] (B) [above left=of A] {$p_3$};
\node[state] (C) [above =of B] {$p_4$};
\node[state] (D) [above right=of C] {$p_2$};
\path (A) [->] edge node[right] {$c_1$} (B);
\path (A) [->] edge node [right] {$d$} (D);
\path (B) [->] edge node [right] {$d$} (C);
\path (C) [->] edge node [left] {$d-c_1$} (D);
\end{tikzpicture}
\vfill
\caption{Step 1}\label{fig8-1} 
\vspace{\baselineskip}
\end{subfigure}\qquad
\begin{subfigure}[b][7cm][s]{.25\textwidth}
\centering
\vfill
\begin{tikzpicture}[state/.style ={circle, draw}]
\node[state] (A) {$p_1$};
\node[state] (B) [above left=of A] {$p_3$};
\node[state] (C) [above =of B] {$p_4$};
\node[state] (D) [above right=of C] {$p_2$};
\node[state] (E) [above right=of A] {$p_5$};
\path (A) [->] edge node[right] {$c_1$} (B);
\path (A) [->] edge node [right] {$d-c_1$} (E);
\path (B) [->] edge node [right] {$d$} (C);
\path (C) [->] edge node [left] {$d-c_1$} (D);
\path (E) [->] edge node [right] {$c_1$} (D);
\end{tikzpicture}
\vfill
\caption{Step 2}\label{fig8-2} 
\vspace{\baselineskip}
\end{subfigure}\qquad
\begin{subfigure}[b][7cm][s]{.25\textwidth}
\centering
\vfill
\begin{tikzpicture}[state/.style ={circle, draw}]
\node[state] (A) {$p_1$};
\node[state] (B) [above left=of A] {$p_{3'}$};
\node[state] (C) [above right=of A] {$p_{5'}$};
\node[state] (D) [above right=of B] {$p_2$};
\path (A) [->] edge node[right] {$c_1$} (B);
\path (A) [->] edge node [right] {$d-c_1$} (C);
\path (B) [->] edge node [right] {$d-c_1$} (D);
\path (C) [->] edge node [right] {$c_1$} (D);
\end{tikzpicture}
\vfill
\caption{Step 3}\label{fig8-3} 
\end{subfigure}
\caption{Going from Figure \ref{fig7-3} to Figure \ref{fig9}}\label{fig8}
\end{figure}

\begin{figure}
\centering
\begin{subfigure}[b][4cm][s]{.25\textwidth}
\centering
\vfill
\begin{tikzpicture}[state/.style ={circle, draw}]
\node[state] (A) {$p_1$};
\node[state] (B) [above left=of A] {$p_2$};
\node[state] (C) [above right=of A] {$p_3$};
\node[state] (D) [above right=of B] {$p_4$};
\path (A) [->] edge node[right] {$f$} (B);
\path (A) [->] edge node [right] {$g$} (C);
\path (B) [->] edge node [right] {$g$} (D);
\path (C) [->] edge node [right] {$h$} (D);
\end{tikzpicture}
\vfill
\end{subfigure}
\caption{Can go from Figure \ref{fig6} to this.}\label{fig9}
\end{figure}
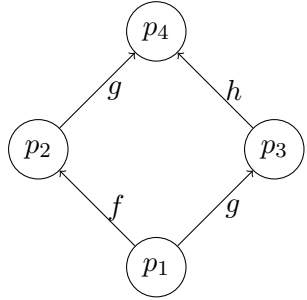

\section{Semi-free actions with discrete fixed point sets} \label{s6}

In this section, we discuss semi-free circle actions with discrete fixed point sets on symplectic manifolds and almost complex manifolds, and prove Theorem \ref{t13}. Recall that Theorem \ref{t13} provides a necessary and sufficient condition for the numbers $N_i$ of fixed points of index $i$ for such an almost complex manifold. As mentioned in the introduction, Tolman and Weitsman \cite{TW} proved that a semi-free symplectic circle action with a discrete fixed point set is Hamiltonian if and only if the fixed point set is non-empty. Feldman \cite{Fe} and Li \cite{L} reproved this.

\begin{theo} \label{t51} \cite{Fe}, \cite{L}, \cite{TW} Let the circle act symplectically and semi-freely on a compact connected symplectic manifold $M$ with a discrete fixed set. If the fixed point set is non-empty, the action is Hamiltonian. Moreover, there are precisely $2^n$ fixed points, where $2n=\dim M$. \end{theo}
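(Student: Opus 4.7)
The plan is to first establish that the action is Hamiltonian, and then invoke the rigidity of $\chi^i(M)$ from Theorem \ref{t21} together with the semi-free localization identity $N_i = N_0 \binom{n}{i}$ (cited from \cite{TW} in the introduction) to count the fixed points as exactly $2^n$.

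For the Hamiltonian property, recall that a symplectic $S^1$-action on a compact symplectic manifold $(M,\omega)$ is Hamiltonian if and only if the closed $1$-form $\iota_\xi\omega$ is exact, where $\xi$ is the fundamental vector field of the action. First I would fix an $S^1$-invariant $\omega$-compatible almost complex structure $J$, so that the symplectic weights and the $J$-weights at each fixed point agree; this is available because the space of compatible almost complex structures is contractible and the acting group is compact. The semi-free hypothesis then ensures that every weight at every fixed point is $\pm 1$, and the Tolman--Weitsman formula gives $N_i = N_0\binom{n}{i}$ for all $0\le i \le n$. Since the fixed point set is non-empty, $N_0 \ge 1$ and hence $N_i \ge 1$ for every $i$; in particular there is at least one fixed point of index $0$, at which every weight is $+1$. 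Near each fixed point, the equivariant Darboux theorem provides a local Hamiltonian function $H_p$, and the key step is to patch these local Hamiltonians into a global moment map. For this, it suffices to show that the period $\int_\gamma \iota_\xi\omega$ vanishes over every loop $\gamma$, which one proves by following the chain of isotropy $2$-spheres connecting the fixed points (analogous to the multigraph structure produced by Lemma \ref{l26}) together with the equivariant cohomological rigidity forced by the semi-free weights $\pm 1$.

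Once the action is known to be Hamiltonian, pick a moment map $\phi : M \to \mathbb{R}$. By Atiyah and Guillemin--Sternberg, the level sets of $\phi$ are connected and the extrema are achieved on connected components of the fixed point set; since the fixed set is discrete, the minimum is a single fixed point $p_0$, forcing $N_0 = 1$. By Theorem \ref{t21}, $N_0 = \chi^0(M) = \mathrm{Todd}(M)$, and the Tolman--Weitsman formula then specializes to $N_i = \binom{n}{i}$. Summing over $i$ yields exactly $\sum_{i=0}^n \binom{n}{i} = 2^n$ fixed points, as claimed.

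The main obstacle is the Hamiltonian property itself: by McDuff's construction there exist compact symplectic $S^1$-actions with fixed points that are not Hamiltonian, so any proof must genuinely exploit the combination of the semi-free hypothesis and the discreteness of the fixed set. Showing the vanishing of the obstruction class $[\iota_\xi\omega] \in H^1(M;\mathbb{R})$ is the subtle step, and this is where the rigidity imposed by weights being $\pm 1$ at isolated fixed points enters essentially; without the semi-free hypothesis one cannot rule out nontrivial monodromy of the local moment maps, and without discreteness of the fixed set one loses the combinatorial control provided by the semi-free localization identity.
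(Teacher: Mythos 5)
The second half of your argument (once the action is known to be Hamiltonian) is fine and matches the standard route the paper points to: the minimum level of the moment map is connected, so a discrete fixed point set forces $N_0=1$, and then $N_i=N_0\binom{n}{i}$ (Lemma \ref{l23}) gives $\sum_i N_i=2^n$. Note that the paper itself does not prove Theorem \ref{t51}; it cites \cite{TW}, \cite{Fe}, \cite{L} and only records the ingredient Lemma \ref{l23}.

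The genuine gap is in the first half, which is the entire content of the theorem: you never actually prove that $[\iota_\xi\omega]\in H^1(M;\mathbb{R})$ vanishes. Two specific problems. First, the mechanism you invoke --- ``following the chain of isotropy $2$-spheres connecting the fixed points'' --- is vacuous here: for a semi-free action the circle acts freely off the fixed point set, so $M^{\mathbb{Z}_w}=M$ for every $w$ and there are no nontrivial isotropy submanifolds, let alone isotropy $2$-spheres joining fixed points. (The multigraph of Lemma \ref{l26} still exists combinatorially, but its label-$1$ edges do not correspond to invariant spheres in $M$, so they cannot be used to evaluate periods of $\iota_\xi\omega$.) Second, the phrase ``equivariant cohomological rigidity forced by the semi-free weights $\pm1$'' is an assertion, not an argument; McDuff's examples show that a compact symplectic $S^1$-manifold with nonempty fixed point set need not be Hamiltonian, so some substantive input is unavoidable. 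The actual proofs supply it: Tolman--Weitsman run a Morse-theoretic analysis of the circle-valued generalized moment map together with ABBV localization in equivariant cohomology, while Feldman and Li use the rigidity of the Todd genus (the paper's Theorem \ref{t21}, giving $N_0=\mathrm{Todd}(M)$) plus the dichotomy that $\mathrm{Todd}(M)=0$ for a non-Hamiltonian symplectic action and $1$ for a Hamiltonian one; combined with $N_i=N_0\binom{n}{i}$, a non-Hamiltonian action would have $N_i=0$ for all $i$, contradicting the nonempty fixed point set. Your proposal contains neither of these arguments, so the Hamiltonian step remains unproved.
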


To prove Theorem \ref{t51}, Tolman and Weitsman proved the following lemma by using equivariant cohomology and the Atiyah-Bott-Berline-Vergne localization formula. This is also proved in \cite{J3} by using Theorem \ref{t21}.

\begin{lem} \label{l23} \cite{J3}, \cite{TW} Let the circle act semi-freely on a $2n$-dimensional compact almost complex manifold with a discrete fixed point set. Then for $0 \leq i \leq n$, $N_i=N_0 \cdot {n \choose i}$, where $N_i$ is the number of fixed points of index $i$. \end{lem}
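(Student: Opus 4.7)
The plan is to obtain the formula as an immediate consequence of Proposition \ref{p22} applied to the smallest positive weight. Because the action is semi-free, every non-zero weight at every fixed point is $\pm 1$, so the smallest positive weight appearing anywhere is $a = 1$. Moreover, the hypothesis that the weights are $\pm 1$ forces the description of a fixed point $p$ to be extremely rigid: the index $n_p$ of $p$ is, by definition, the number of negative weights at $p$, so $p$ has exactly $n_p$ weights equal to $-1$ and exactly $n - n_p$ weights equal to $+1$. In other words, for every fixed point $p$ one has $N_p(1) = n - n_p$ and $N_p(-1) = n_p$.

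With these counts in hand, I would then apply Proposition \ref{p22} with $a = 1$. For each $0 \le j \le n-1$ the proposition reads
\[
\sum_{p \in M^{S^1},\, n_p=j} N_p(1) \;=\; \sum_{p \in M^{S^1},\, n_p=j+1} N_p(-1),
\]
and the weight-count in the previous paragraph converts this identity into the purely combinatorial recurrence $(n-j)N_j = (j+1)N_{j+1}$ for $0 \le j \le n-1$.

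Finally, a straightforward induction on $k$ starting from $N_0$ solves this recurrence: $N_{k+1} = \frac{n-k}{k+1} N_k$ together with $N_0 = N_0 \binom{n}{0}$ yields $N_k = \binom{n}{k} N_0$ for every $0 \le k \le n$, which is the desired formula. There is no genuine obstacle here; the only point requiring any care is the bookkeeping that translates semi-freeness into the multiplicity count $N_p(1) = n - n_p$, $N_p(-1) = n_p$, after which Proposition \ref{p22} does all the work.
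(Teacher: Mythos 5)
Your argument is correct, but it follows a different route from either of the two proofs the paper points to: the paper does not prove Lemma \ref{l23} itself, instead quoting it from \cite{TW}, where it is established via equivariant cohomology and the Atiyah-Bott-Berline-Vergne localization formula, and from \cite{J3}, where it is derived from the rigidity of the Hirzebruch $\chi_y$-genus (Theorem \ref{t21}) by computing the fixed-point contributions directly. You instead route everything through Proposition \ref{p22}: semi-freeness forces every weight to be $\pm 1$, so the smallest positive weight is $a=1$, a fixed point $p$ of index $j$ has $N_p(1)=n-j$ and $N_p(-1)=j$, and the proposition becomes the recurrence $(n-j)N_j=(j+1)N_{j+1}$, which telescopes to $N_k=\binom{n}{k}N_0$. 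This is a clean and noticeably shorter derivation given the toolkit already assembled in Section \ref{s2}; its only cost is that it leans on Proposition \ref{p22}, which is itself proved in \cite{JT} by $\chi_y$-genus techniques, so in substance it repackages the second cited approach through an intermediate statement rather than replacing it. One small point you should make explicit: to apply Proposition \ref{p22} with $a=1$ you need the weight $+1$ to occur at some fixed point (otherwise ``the smallest positive weight'' is not defined). When the fixed point set is non-empty this follows from the lemma of Hattori and Li quoted just before Proposition \ref{p22}, since the total number of occurrences of $+1$ equals that of $-1$ and the two counts sum to $n\cdot|M^{S^1}|>0$; when the fixed point set is empty the statement is vacuous. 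With that one-line remark added, the proof is complete.
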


For a symplectic action there are exactly $2^n$ fixed points, since the fixed point set is discrete and there is a unique fixed point of index 0, that is, $N_0=1$.

For any possible fixed point data for a semi-free $S^1$-action on a compact almost complex manifold with a discrete fixed point set, we construct an almost complex $S^1$-manifold that has the given fixed point data. 

\begin{theo} \label{t52} Let $n \geq 2$. Given any positive integer $k$, there exists a $2n$-dimensional compact connected almost complex manifold $M$ equipped with a semi-free circle action having a discrete fixed point set, whose Todd genus is $k$. Moreover, for $0 \leq i \leq n$, $N_i=k \cdot {n \choose i}$, where $N_i$ is the number of fixed points of index $i$. In particular, the total number of fixed points is equal to $k \cdot 2^n$. \end{theo}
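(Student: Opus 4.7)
The plan is to combine the dimension-4 existence result (Theorem \ref{t11}) with a product construction with 2-spheres. For $n=2$, I would simply pick a connected 2-regular semi-free multigraph $\Gamma$ with $T(\Gamma)=k$, for instance a single cycle on $4k$ vertices of indices $0,1,2,1,0,1,2,1,\ldots$ as in Figure \ref{fig9'}; Theorem \ref{t11} then produces a 4-dimensional compact connected almost complex $S^1$-manifold $M_0$ described by $\Gamma$. Since every label of $\Gamma$ is $1$, every weight at every fixed point of $M_0$ is $\pm 1$, so the action is semi-free, and $\textrm{Todd}(M_0)=T(\Gamma)=k$.

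For $n\ge 3$, I would set $M:=M_0\times(\mathbb{CP}^1)^{n-2}$ with the diagonal $S^1$-action, where $S^1$ acts on each $\mathbb{CP}^1$ by $g\cdot[z_0:z_1]=[z_0:gz_1]$. The product almost complex structure is $S^1$-invariant because each factor's is, the fixed point set is the (discrete) Cartesian product of the factor fixed point sets, and every weight at every fixed point of $M$ is $\pm 1$. Hence $M$ is a compact connected almost complex $S^1$-manifold admitting a semi-free action with discrete fixed point set.

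To obtain the numerical claim, I would note that at a fixed point of $M$ the multiset of weights is the concatenation of the weights of the projected fixed points, so the index adds across factors. Lemma \ref{l23} gives $N_a(M_0)=k\binom{2}{a}$, and $(\mathbb{CP}^1)^{n-2}$ has $\binom{n-2}{b}$ fixed points of index $b$; Vandermonde's identity then yields
\[
N_i(M)=\sum_{a+b=i}k\binom{2}{a}\binom{n-2}{b}=k\binom{n}{i},
\]
so in particular $N_0=\textrm{Todd}(M)=k$ and the total count is $k\cdot 2^n$. Alternatively, once $\textrm{Todd}(M)=k$ is verified, the formula for $N_i$ is immediate from Lemma \ref{l23} applied directly to $M$.

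No serious obstacle is expected: all of the nontrivial content is packaged into Theorem \ref{t11}, which supplies $M_0$; everything that follows is a routine product construction together with a Vandermonde count. The only point to check carefully is that the diagonal action preserves the product almost complex structure, which is automatic because $d(g\times g)=dg\oplus dg$ commutes with $J_{M_0}\oplus J_{(\mathbb{CP}^1)^{n-2}}$.
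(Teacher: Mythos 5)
Your proposal is correct and follows essentially the same route as the paper: the paper also takes a connected 2-regular semi-free multigraph $\Gamma$ with $T(\Gamma)=k$, invokes the construction behind Theorem \ref{t11} (in the paper, directly via Lemma \ref{l32}) to get a 4-dimensional semi-free example, and then multiplies by $n-2$ rotating 2-spheres with the diagonal action, concluding the count via Theorem \ref{t21} and Lemma \ref{l23}. Your explicit Vandermonde computation of $N_i$ is just a slightly more detailed version of the same bookkeeping.
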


\begin{proof} Let $\Gamma$ be a connected 2-regular admissible semi-free multigraph with $T(\Gamma)=k$. By Lemma \ref{l32}, $\Gamma$ extends to $\mathbb{Z}^2$. Hence, by Proposition \ref{p37}, there exists a 4-dimensional compact connected almost complex manifold $M'$ equipped with a semi-free circle action having a discrete fixed point set that is described by $\Gamma$. The manifold $M'$ has $4k$ fixed points, $k$ fixed points of index 0, $2k$ fixed points of index 1, and $k$ fixed points of index 2. Let $M''$ be the product $S^2 \times \cdots \times S^2$ of $(n-2)$ copies of the 2-spheres, where on each $S^2$ the circle acts by rotating once. The rotation on each 2-sphere has 2 fixed points, the north pole and the south pole, that have weights $-1$ and $+1$, respectively. Thus, $M''$ has $2^{n-2}$ fixed points, and the number of fixed points with index $i$ is equal to ${n-2 \choose i}$ for each $0 \leq i \leq n-2$. Taking the product of $M'$ and $M''$ with a diagonal action, the desired manifold $M$ is constructed. Since $M'$ has $k$ fixed points of index 0 and $M''$ has 1 fixed point of index 0, it follows that $M$ has $k$ fixed points of index 0. Since the number of fixed points of index 0 is equal to the Todd genus of $M$ by Theorem \ref{t21}, this theorem follows. \end{proof}

Theorem \ref{t13} follows from Lemma \ref{l23} and Theorem \ref{t52}.

\section*{Funding}

This work was supported by the National Research Foundation of Korea(NRF) grant funded by the Korea government(MSIT) [2021R1C1C1004158 to D.J.].

\section*{Acknowledgments}

The author would like to thank Mikiya Masuda and Susan Tolman for fruitful conversations. The author would also like to thank the anonymous referee for many valuable comments and suggestions which helped improve the exposition and quality of this paper.

\end{document}